\numberwithin{equation}{section}
\newtheorem{theorem}{Theorem}
\newtheorem{lemma}{Lemma}
\newtheorem{condition}[theorem]{Condition}
\begin{document}
 \title[Automorphisms of blowups of threefolds]{Automorphisms of blowups of threefolds being Fano or having Picard number $1$}
 \author{Tuyen Trung Truong}
    \address{School of Mathematics, Korea Institute for Advanced Study, Seoul 130-722, Republic of Korea}
 \email{truong@kias.re.kr}
\thanks{}
    \date{\today}
    \keywords{Automorphisms, Blowups, Dynamical degrees, Topological entropy}
    \subjclass[2010]{37F, 14D, 32U40, 32H50}
    \begin{abstract}Let $X_0$ be a smooth projective threefold which is Fano or which has Picard number $1$. Let $\pi :X\rightarrow X_0$ be a finite composition of blowups along smooth centers. We show that for "almost all" of such $X$, if $f\in Aut(X)$ then its first and second dynamical degrees are the same. We also construct many examples of finite blowups $X\rightarrow X_0$, on which any automorphism is of zero entropy.  
    
The main idea is that because of the log-concavity of dynamical systems and the invariance of Chern classes under holomorphic automorphisms, there are some constraints on the nef cohomology classes. 

We will also discuss a possible application of these results to a threefold constructed by Kenji Ueno. 
\end{abstract}
\maketitle
\section{Introduction}
\label{SectionIntroduction}
While there are many examples of compact complex surfaces having automorphisms of positive entropies (works of Cantat \cite{cantat}, Bedford-Kim \cite{bedford-kim2}\cite{bedford-kim1}\cite{bedford-kim}, McMullen \cite{mcmullen3}\cite{mcmullen2}\cite{mcmullen1}\cite{mcmullen}, Oguiso \cite{oguiso3}\cite{oguiso2}, Cantat-Dolgachev \cite{cantat-dolgachev}, Zhang \cite{zhang}, Diller \cite{diller}, D\'eserti-Grivaux \cite{deserti-grivaux}, Reschke \cite{reschke},...), there are few interesting examples of manifolds of higher dimensions having automorphisms of positive entropies (Oguiso \cite{oguiso1}\cite{oguiso}, Oguiso-Perroni \cite{oguiso-perroni},...). In particular, for the class of smooth rational threefolds, there are currently only two known examples of manifolds with primitive automorphisms of positive entropy (see \cite{oguiso-truong, catanese-oguiso-truong, colliot-thelene}). Here  a primitive automorphism, defined by D.-Q. Zhang \cite{zhang}, is one that has no non-trivial invariant fibrations over a base of dimension $1$ or $2$. For general properties on automorphism groups of compact K\"ahler manifolds, see the recent survey paper \cite{dinh}.   
 
Then, it is natural to ask for what happens in dimension $3$ and higher. For example, the following question was asked by Eric Bedford in 2011: 

{\bf Question 1}. Is there a finite composition of blowups at points or smooth curves $X\rightarrow \mathbb{P}^3$ starting from $\mathbb{P}^3$ and an automorphism $f:X\rightarrow X$ with positive entropy?
 
This paper aims to study Question 1 and some related questions. We give many evidences to that the answer to Question 1 is negative and to that the examples in \cite{oguiso-truong, catanese-oguiso-truong, colliot-thelene} can not be obtained as smooth blowups of smooth threefolds having Picard number $1$ or being Fano.   

Our results and proofs are stated in terms of dynamical degrees, which we recall now. Let $X$ be a smooth projective threefold. We denote by $Pic (X)$ the Picard group of $X$, $Pic _{\mathbb{Q}}(X)=Pic (X)\otimes _{\mathbb{Z}}\mathbb{Q}$ and $Pic _{\mathbb{R}}(X)=Pic (X)\otimes _{\mathbb{Z}}\mathbb{R}$. Let $Nef(X)\subset Pic_{\mathbb{R}}(X)$ be the cone of nef classes, which is the closure of the cone of ample classes. By Kleiman's criterion, a class in $Pic _{\mathbb{R}}(X)$ is nef iff it has non-negative intersection with every curve on $X$. For later use, we denote by  $c_1(X)$ and $c_2(X)$ the first and second Chern classes of $X$. Let $f:X\rightarrow X$ be an automorphism. Then $f$ preserves both $Pic (X)$ and $Nef(X)$. Let $\omega$ be an ample class on $X$. We define the first and second dynamical degrees of $f$ as follows:
\begin{eqnarray*}
\lambda _1(f)&=&\lim _{n\rightarrow\infty}[(f^n)^*(\omega ).\omega ^2]^{1/n},\\
\lambda _2(f)&=&\lim _{n\rightarrow\infty}[(f^n)^*(\omega ^2).\omega ]^{1/n}.
\end{eqnarray*}
Here are some properties of these dynamical degrees: $\lambda _1(f)^2\geq \lambda _2(f)\geq 1$ and $\lambda _1(f^{-1})=\lambda _2(f)$. For more on dynamical degrees see \cite{dinh-sibony2}. 

Entropy of $f$ can be computed via dynamical degrees by Gromov-Yomdin's theorem \cite{gromov, yomdin}: $h_{top}(f)=\log \max \{\lambda _1(f),\lambda _2(f)\}$.  Hence, $f$ has positive entropy iff $\lambda _1(f)>1$. 

Primitivity of $f$ can also be detected from dynamical degrees via the following criterion (see  \cite{oguiso-truong}), which is a consequence of results in\cite{dinh-nguyen} and \cite{dinh-nguyen-truong}: If $\lambda _1(f)\not= \lambda _2(f)$ then $f$ is primitive.

The main idea behind all the results of this paper is that the existence of an automorphism $f$ of positive entropy on $X$ imposes some constraints on the cohomology groups of $X$. In fact, let $0\not=\zeta \in Nef(X)$ be such that $f^*(\zeta )=\lambda _1(f)\zeta $ (the existence of such a class is guaranteed by Perron-Frobenius theorem). The differential $df$ gives an isomorphism between the tangent bundle $TX$ and its pullback $f^*(TX)$. Hence, from the properties of Chern classes we have $f^*c_1(X)=c_1(X)$ and $f^*c_2(X)=c_2(X)$. Since $\lambda _1(f)>1$ and $X$ has dimension $3$, it follows that $$\zeta ^3=\zeta ^2.c_1(X)=\zeta .c_1(X)^2=0.$$ In fact, stronger constraints are satisfied. 
\begin{theorem}
Let $X$ be a projective manifold of dimension $3$ and $f:X\rightarrow X$ an automorphism. 

1) If $f$ has positive entropy, there is a nef class $\zeta $ which is not in $\mathbb{R}.Pic _{\mathbb{Q}}(X)$ such that $\zeta ^2=0$, $\zeta .c_1(X)^2=0$ and $\zeta .c_2(X)=0$.

2) If $\lambda _1(f)\not= \lambda _2(f)$, there is a nef class $\zeta$ which is not in $\mathbb{R}.Pic _{\mathbb{Q}}(X)$ such that $\zeta ^2=0$, $\zeta .c_1(X)=0$ and $\zeta .c_2(X)=0$. 

\label{TheoremNonExistenceClassB}\end{theorem}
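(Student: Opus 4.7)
The plan is to apply Perron--Frobenius to the action of $f^*$ on the nef cone of $X$, and then extract the stated vanishings from three basic inputs: the canonical isomorphism $f^*TX \cong TX$ (which yields $f^*c_k(X) = c_k(X)$), the triviality of $f^*$ on $H^6(X,\mathbb{R})\cong\mathbb{R}$, and the identification of $\lambda_2(f)$ with the spectral radius of $f^*$ on $H^4(X,\mathbb{R})$. As a preparatory reduction, the log-concavity $\lambda_1^2\geq\lambda_2\geq\lambda_1^{1/2}$ combined with $\lambda_1(f^{-1})=\lambda_2(f)$ shows that when $\lambda_1>1$ at least one of $\lambda_1^2>\lambda_2$ or $\lambda_2^2>\lambda_1$ must be strict (otherwise $\lambda_1^3=1$). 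So, possibly replacing $f$ by $f^{-1}$, I may assume $\lambda_1(f)^2>\lambda_2(f)$ in Part 1; in Part 2 (where $\lambda_1\neq\lambda_2$), I similarly arrange $\lambda_1>\lambda_2$.

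Applying Perron--Frobenius to $f^*|_{Pic(X)_{\mathbb{R}}}$, which preserves the pointed cone $Nef(X)$, yields a nonzero $\zeta\in Nef(X)$ with $f^*\zeta=\lambda_1\zeta$. For any top-degree number $N=\zeta\cdot P$ with $P$ a monomial in $c_1(X),c_2(X)$, the identity $N = f^*(N) = \lambda_1\cdot N$ together with $\lambda_1>1$ forces $N=0$; this yields $\zeta\cdot c_1(X)^2 = \zeta\cdot c_2(X) = 0$ in both parts. For the stronger Part 2 conclusion $\zeta\cdot c_1(X)=0$ in $H^4$, note that $\zeta\cdot c_1(X)$ is a $\lambda_1$-eigenvector of $f^*$ on $H^4$, while the spectral radius there equals $\lambda_2<\lambda_1$, forcing it to vanish. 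The same spectral-radius argument applied to $\zeta^2\in H^4$, a $\lambda_1^2$-eigenvector with $\lambda_1^2>\lambda_2$, gives $\zeta^2=0$.

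The hardest part will be the irrationality claim $\zeta\notin\mathbb{R}\cdot Pic_{\mathbb{Q}}(X)$. This is immediate when $\lambda_1\notin\mathbb{Q}$, since a real multiple of a rational class cannot be an eigenvector of the integer-coefficient map $f^*|_{Pic(X)}$ for an irrational eigenvalue. The genuinely delicate case is $\lambda_1\in\mathbb{Z}_{\geq 2}$, where the Perron eigenvector could a priori lie on a rational ray; I expect the real technical work to lie here, either by perturbing $\zeta$ within a higher-dimensional rational $\lambda_1$-eigenspace to obtain an irrational nef vector, or by arguing that a rational witness to the above vanishings would force a very rigid (fibration-like) geometric structure on $X$ that can then be deformed to yield an irrational nef class.
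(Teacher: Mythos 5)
Your treatment of the vanishing statements is correct and is essentially the paper's argument, just reorganized: where you reduce up front to $\lambda_1(f)^2>\lambda_2(f)$ by replacing $f$ with $f^{-1}$, the paper takes nef eigenvectors for both $f^*$ and $(f^{-1})^*$ and derives the contradiction $\lambda_1(f)\geq\lambda_1(f)^4$ if neither square vanishes; the two bookkeepings are equivalent. The degree-$6$ argument for $\zeta\cdot c_1(X)^2=\zeta\cdot c_2(X)=0$ and the spectral-radius argument on $H^{2,2}$ for $\zeta^2=0$ and (in Part 2) $\zeta\cdot c_1(X)=0$ are exactly what is needed.

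The genuine gap is in the last paragraph. The case you flag as ``genuinely delicate,'' namely $\lambda_1(f)\in\mathbb{Z}_{\geq 2}$, is vacuous, and neither of the workarounds you sketch (perturbing inside a rational eigenspace, or extracting a rigid fibration structure) is needed or obviously workable. Since $f$ is an automorphism, $f^*$ acts on the lattice $Pic(X)/\mathrm{torsion}$ by an integer matrix $A$ whose inverse $(f^{-1})^*$ is also an integer matrix, so $\det A=\pm 1$ and the characteristic polynomial of $A$ is monic with integer coefficients and constant term $\pm 1$. A rational eigenvalue of such a matrix is an algebraic integer, hence an integer, and it divides the constant term, hence equals $\pm 1$. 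Therefore $\lambda_1(f)>1$ forces $\lambda_1(f)$ to be irrational (this is Lemma~\ref{LemmaLambdaIrrational} in the paper), and your ``immediate'' case is in fact the only case: an eigenvector of an integer matrix for an irrational eigenvalue cannot lie on a rational ray, so $\zeta\notin\mathbb{R}\cdot Pic_{\mathbb{Q}}(X)$ always. With this observation inserted, your proof closes completely; without it, the irrationality claim --- which is the part of the theorem that actually carries the weight in the applications to Conditions A and B --- is left unproved.
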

Here we comment on the condition $\zeta ^2=0$. If $X$ has dimension $2$, then this condition is one homogeneous equation in $m$ variables (here, $m$ is the Picard number of $X$)  and hence is very easily satisfied.  In contrast, when $X$ has dimension $3$ or bigger, then the condition $\zeta ^2=0$ is a system of $p\geq m$ homogeneous equations in the $m$ variables (here $p$ is the dimension of $\bigwedge ^2Pic _{\mathbb{R}}(X)$), and hence is more difficult to be satisfied. This is a heuristic argument for why it is difficult to find automorphisms of positive entropy in dimension $3$ or larger.   

Based on Theorem \ref{TheoremNonExistenceClassB}, we state some conditions on nef cohomology classes. 

\begin{condition} Let $X$ be a smooth projective threefold. 

1) Condition A: We say that $X$ satisfies Condition A if whenever $\zeta \in Nef (X)$ is such that $\zeta ^2=0$, $\zeta .c_1(X)^2\geq 0$ and $\zeta .c_2(X)\leq 0$, then $\zeta \in \mathbb{R}.Pic _{\mathbb{Q}}(X)$.

2) Condition B: We say that $X$ satisfies Condition B if whenever $\zeta \in Nef (X)$ is such that $\zeta ^2=0$, $\zeta .c_1(X)= 0$ and $\zeta .c_2(X)\leq 0$, then $\zeta \in \mathbb{R}.Pic _{\mathbb{Q}}(X)$. 
\label{Conditions}\end{condition}

By Theorem \ref{TheoremNonExistenceClassB}, if $X$ satisfies Condition A then any automorphism on $X$ has zero entropy, and if $X$ satisfies Condition B then for any automorphism $f$ of $X$ we have $\lambda _1(f)=\lambda _2(f)$. While requiring more than the assumptions in part 1) of Theorem \ref{TheoremNonExistenceClassB}, Condition A is very suitable for inductive arguments. A similar comment applies for Condition B. 

Now we are ready to state the main results of this paper. The first result is for blowups of some special configurations of $\mathbb{P}^3$. 
\begin{theorem}
Let $p_1,\ldots ,p_n$ be distinct points in $X_0=\mathbb{P}^3$ such that any $4$ points of them do not belong to the same hyperplane. Let  $C_{i,j}$ be the line connecting the points $p_i$ and $p_j$. Let $\pi _1:X_1\rightarrow \mathbb{P}^3$ be the blowup at $p_1,\ldots ,p_n$. Let $D_{i,j}\subset X_1$ be the strict transforms of $C_{i,j}$, and $\pi _2:X_2\rightarrow X_1$ be the blowup at $D_{i,j}$. Then any automorphism of $X_2$ has zero entropy. 
\label{TheoremResolutionTheMapJ}\end{theorem}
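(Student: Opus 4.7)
My plan is to verify Condition A of Definition \ref{Conditions} for $X_2$; by Theorem \ref{TheoremNonExistenceClassB}(1) this immediately implies that every automorphism of $X_2$ has zero entropy. The hypothesis that no four of the $p_i$'s are coplanar implies in particular that no three are collinear, and a short case check then shows that the strict transforms $D_{ij}\subset X_1$ are pairwise disjoint smooth rational curves: two such curves $D_{ij},D_{kl}$ that met would either meet at some $p_m$ with $m\in\{i,j\}\cap\{k,l\}$ (already blown up in $X_1$), or force $C_{ij},C_{kl}$ to span a plane containing four of the $p_m$'s. Hence $\pi=\pi_1\circ\pi_2:X_2\to\mathbb{P}^3$ is a sequence of blowups along smooth centers that are, at each level, pairwise disjoint, and $\mathrm{Pic}_{\mathbb{R}}(X_2)$ is freely generated by the hyperplane pullback $H$, the total transforms $E_i$ of the point-exceptional divisors, and the new exceptional divisors $F_{ij}$ over the $D_{ij}$.

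The next step is to compute the relevant intersection-theoretic data. Disjointness of the centers makes most triple intersections among $H,E_i,F_{ij}$ vanish by push-forward to $\mathbb{P}^3$, and the nonzero ones are determined by the standard blowup formulas: $H^3=1$, $E_i^3=1$, and $F_{ij}^3=\deg N_{D_{ij}/X_1}=-2$, this last being computed from adjunction ($K_{X_1}\cdot D_{ij}=-4+2\cdot 2=0$). The blowup formulas also yield $c_1(X_2)=4H-2\sum_i E_i-\sum_{i<j}F_{ij}$ and an analogous expression for $c_2(X_2)$. Writing a general nef class as $\zeta=aH-\sum_i\alpha_iE_i-\sum_{i<j}\beta_{ij}F_{ij}$ and testing nefness against (i) a pullback line in $\mathbb{P}^3$, (ii) a line in each $E_i\cong\mathbb{P}^2$ avoiding the blown-up points, and (iii) a fiber of each $F_{ij}\to D_{ij}$ gives $a,\alpha_i,\beta_{ij}\geq 0$ respectively. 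Pairing the class $\zeta^2\in H^4(X_2,\mathbb{R})$ successively against $H$, $E_k$, and $F_{mn}$ turns the equation $\zeta^2=0$ into the polynomial system
\begin{align*}
a^2 &= \sum_{i<j}\beta_{ij}^2,\\
\alpha_k^2 &= \sum_{j\neq k}\beta_{kj}^2 \qquad(k=1,\dots,n),\\
\beta_{mn}\bigl(a-\alpha_m-\alpha_n-\beta_{mn}\bigr) &= 0 \qquad\text{for each pair }(m,n).
\end{align*}

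The main obstacle is to show that every nonnegative solution of this system, together with the inequalities $\zeta\cdot c_1(X_2)^2\geq 0$ and $\zeta\cdot c_2(X_2)\leq 0$, must be proportional to a rational vector --- equivalently, $\zeta\in\mathbb{R}\cdot\mathrm{Pic}_{\mathbb{Q}}(X_2)$. I would proceed by case analysis on the support $S=\{(m,n):\beta_{mn}>0\}$. When $S=\emptyset$, the first and second equations force $\zeta=0$. Otherwise, summing the second equation over $k$ yields $\sum_k\alpha_k^2=2a^2$, and substituting $\beta_{mn}=a-\alpha_m-\alpha_n$ for $(m,n)\in S$ into the first two equations produces a reduced quadratic system whose form depends only on the incidence graph of $S$. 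Small configurations such as $|S|=1,2$ (with or without a common vertex) or a triangle can be ruled out directly by short arithmetic, and I expect the general case to reduce to these by an induction that at each step either kills some $\beta_{mn}$ (removing an edge from $S$) or some $\alpha_k$ (removing a vertex), collapsing either to $\zeta=0$ or to a rational one-parameter family. The coplanarity hypothesis enters essentially through the pairwise disjointness of the $F_{ij}$'s; otherwise extra cross terms $F_{ij}F_{kl}$ would appear in the system and could admit irrational nef solutions.
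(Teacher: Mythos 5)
Your opening moves are sound and match the paper's starting point: reduce the theorem to a statement about nef classes via Theorem \ref{TheoremNonExistenceClassB}, check that the $D_{i,j}$ are pairwise disjoint smooth rational curves with $c_1(X_1).D_{i,j}=0$, and expand $\zeta$ in the basis $H,E_i,F_{i,j}$ with nonnegative coefficients. But there is a sign error in your intersection data: for the blowup of a threefold along a smooth curve $C$ one has $F^3=(F|_F)^2=-c_1(N_{C/Y})$, so here $F_{i,j}^3=-(-2)=+2$, not $-2$; your third family of equations should read $\beta_{mn}(a-\alpha_m-\alpha_n+\beta_{mn})=0$, which changes the reduced system you propose to analyze.

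The more serious problem is that the decisive step is missing. Everything up to the quadratic system is routine bookkeeping; the content of the theorem is precisely the claim that every nonnegative solution compatible with nefness and the Chern-class conditions lies on a rational ray, and for that you offer only ``I expect the general case to reduce to these by an induction.'' I see no reason the system coming from $\zeta^2=0$ alone closes: the paper in fact never uses $\zeta^2=0$ for this theorem. Instead it exploits the two \emph{linear} conditions $\zeta.c_1(X_2)^2=0$ and $\zeta.c_2(X_2)=0$, which yield $(6+\tfrac{n(n-1)}{2})\deg(u)=(n-1)\sum_l\beta_l$ and $\tfrac{11}{2}\deg(u)\geq\sum_l\beta_l$; these two relations contradict each other and force $\deg(u)=0$ only when $n\geq 10$. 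For $4\leq n\leq 9$ one needs additional nef inequalities obtained by testing $\zeta$ against explicit effective curves in $\mathbb{P}^3$ --- strict transforms of rational normal cubics through six of the points, and conics or lines for smaller $n$ --- and for $n\leq 3$ against lines through pairs of points. Your proposal contains no analogous geometric input, and without it the case analysis on the support of $\{\beta_{mn}\}$ has no mechanism to rule out irrational solutions for small $n$. To repair the argument you should either supply and verify the inductive combinatorial claim in full, or switch to the paper's route of combining the linear Chern-class identities with nefness against these auxiliary curves.
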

{\bf Remark.} Igor Dolgachev and Yuri Prokhorov informed us that in the special cases where $4\leq n\leq 7$, then the automorphism group of $X_2$ in Theorem \ref{TheoremResolutionTheMapJ} is finite. The conclusion of Theorem \ref{TheoremResolutionTheMapJ} can be proved for the blowups of more general configurations in $\mathbb{P}^3$. However, since the statements of these generalizations are a bit complicated, we refer to Section \ref{SectionExamples} for more details.

The next two main results of the paper are for threefolds having Picard number $1$ or satisfying a special property on the second Chern class. We recall that a class $\zeta$ on $X$ is movable if there is a smooth blowup $\pi :Z\rightarrow X$ such that $\zeta$ is the pushforward of some nef class on $Z$. 
\begin{theorem}
Let $X_0$ be a threefold with Picard number $1$. Let $C_1,\ldots ,C_t\subset X_0$ be smooth curves which are pairwise disjoint. Let $p_1,\ldots ,p_s\in X_0$ be distinct points, which are allowed to belong to the curves $C_1,\ldots ,C_t$.  Let $\pi _1:X_1\rightarrow X_0$ be the blowup at $p_1,\ldots ,p_s$, and $\pi _2:X_2\rightarrow X_1$ the blowup at $C_1,\ldots ,C_t$. Then $X_2$ satisfies Properties A and B.  
\label{Theorem1}\end{theorem}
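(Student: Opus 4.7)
The plan is to prove the apparently stronger statement that any $\zeta\in Nef(X_2)$ with $\zeta^2=0$ already lies in $\mathbb{R}.Pic_{\mathbb{Q}}(X_2)$; since Conditions A and B both include $\zeta^2=0$ among their hypotheses, this implies both and bypasses any essential use of the Chern classes. The argument is a direct intersection-theoretic computation in the blowup basis of $Pic_{\mathbb{R}}(X_2)$.

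Fix the basis $H,E_1,\ldots,E_s,F_1,\ldots,F_t$ of $Pic_{\mathbb{R}}(X_2)$, where $H=\pi^*H_0$ for $H_0$ a generator of $Pic(X_0)_{\mathbb{R}}$, $E_i=\pi_2^*(E_i^{(1)})$ is the total transform of the point-exceptional divisor over $p_i$, and $F_j$ is the exceptional divisor of $\pi_2$ over the strict transform $\widetilde{C}_j$. Writing $h_j:=H_0\cdot C_j$, $m_j:=\#\{i:p_i\in C_j\}$, $\epsilon_{ij}:=1$ if $p_i\in C_j$ and $0$ otherwise, and $n_j:=\deg N_{C_j/X_0}-2m_j$, a local blowup analysis gives the non-vanishing triple intersections
\begin{equation*}
H^3=H_0^3,\quad H\cdot F_j^2=-h_j,\quad E_i^3=1,\quad E_i\cdot F_j^2=-\epsilon_{ij},\quad F_j^3=-n_j,
\end{equation*}
all other triples vanishing by the pairwise disjointness of the $C_j$ and of the $E_i^{(1)}$. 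A key point is the asymmetry $E_i^2\cdot F_j=0$ (by the projection formula, since $E_i$ is pulled back from $X_1$) versus $E_i\cdot F_j^2=-\epsilon_{ij}$, reflecting that the strict transform of $E_i^{(1)}$ meets $F_j$ along the fiber of $F_j\to\widetilde{C}_j$ over $q_{ij}=\widetilde{C}_j\cap E_i^{(1)}$.

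Write $\zeta=aH-\sum b_iE_i-\sum c_jF_j$ and test nefness against a generic line in some $E_i^{(1)}$, a generic fiber of $F_j\to\widetilde{C}_j$, and the pullback of a complete-intersection curve from two large multiples of $H_0$ in $X_0$: these three pairings force $b_i\geq 0$, $c_j\geq 0$, and $a\geq 0$ respectively. The condition $\zeta^2=0$ in $N_1(X_2)_{\mathbb{R}}$ is equivalent to its pairings with $H$, each $E_k$, and each $F_l$ vanishing, which the intersection table unpacks as
\begin{equation*}
a^2H_0^3=\sum_j c_j^2 h_j,\qquad b_k^2=\sum_{j:\,p_k\in C_j}c_j^2,\qquad c_l\bigl(2ah_l-c_ln_l-2\tau_l\bigr)=0,
\end{equation*}
where $\tau_l:=\sum_{i:\,p_i\in C_l}b_i$.

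The pairwise disjointness of the $C_j$ is essential here: it forces each $p_i$ to lie on at most one curve $C_{l(i)}$, so the equation on $b_k$ collapses to $b_k=c_{l(k)}$ (or $b_k=0$), and therefore $\tau_l=m_lc_l$. Substituting into the equation on $c_l$ gives $c_l\cdot\deg N_{C_l/X_0}=2ah_l$, so $c_l=2ah_l/\deg N_{C_l/X_0}$ whenever $c_l\neq 0$ (and the degenerate case $\deg N_{C_l/X_0}\leq 0$ forces $c_l=0$, since $a,h_l\geq 0$). In all cases $c_l$ and $b_k$ come out as rational multiples of $a$, so $\zeta=a\cdot\eta$ for some $\eta\in Pic_{\mathbb{Q}}(X_2)$, which is exactly $\zeta\in\mathbb{R}.Pic_{\mathbb{Q}}(X_2)$. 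The main obstacle is keeping this bookkeeping straight when points $p_i$ lie on the curves $C_j$: the terms $E_i\cdot F_j^2=-\epsilon_{ij}$ couple the coefficients $b_i$ and $c_j$, but the disjointness of the $C_j$ makes this coupling triangular rather than genuinely simultaneous, and the Picard-number-one hypothesis leaves $a$ as the sole free real parameter to which all other coefficients are rationally related.
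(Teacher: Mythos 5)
Your proposal is correct, and it establishes the same strengthened statement that the paper's argument targets (nefness together with $\zeta ^2=0$ already forces $\zeta \in \mathbb{R}.Pic _{\mathbb{Q}}(X_2)$, so the Chern-class hypotheses in Conditions A and B are never needed), but by a genuinely different route. The paper first replaces $X_2$ by the isomorphic model obtained by blowing up the curves $C_1,\ldots ,C_t$ first and then the fibers $M_j=(\pi _1')^{-1}(p_j)$ of the resulting ruled exceptional divisors; it verifies the strengthened statement only for the curve-blowup stage $X_1'$ (by exactly your computation with all $m_j=0$), and then handles the extra blowups along the rational curves $M_j$ by invoking the inductive Theorem \ref{Theorem4}, using that $c_1(X_1').M_j=1$ is odd and that $N_{M_j/X_1'}$ is decomposable. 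You instead keep the blowups in the stated order and do the full bookkeeping on $X_2$ itself; the new ingredient relative to the paper is the mixed term $E_i\cdot F_j^2=-\epsilon _{ij}$ together with the observation that disjointness of the $C_j$ collapses the coupling $b_k^2=\sum _{j:\,p_k\in C_j}c_j^2$ to $b_k=c_{l(k)}$, so that $\tau _l=m_lc_l$ and the $F_l$-equation reduces to $c_l\cdot \deg N_{C_l/X_0}=2ah_l$. Your version buys self-containedness (no appeal to Theorem \ref{Theorem4} and its normal-bundle hypotheses) and proves the strong conclusion for $X_2$ directly rather than only for the intermediate $X_1'$; the paper's version buys brevity by reusing its inductive machinery. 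One small point to tighten: when $\deg N_{C_l/X_0}=0$ the relation $c_l\cdot \deg N_{C_l/X_0}=2ah_l$ only forces $a=0$, not $c_l=0$; to kill $c_l$ you must then return to $a^2H_0^3=\sum _jc_j^2h_j$ and use $h_j=H_0\cdot C_j>0$ (ampleness of $H_0$), which gives $c_j=0$ for all $j$ whenever $a=0$. With that remark added, the argument is complete.
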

We note that in general Theorem \ref{Theorem1} does not hold for threefolds $X_0$ with Picard number $\geq 2$ (for example when $X_0=\mathbb{P}^2\times \mathbb{P}^1$). However, the theorems below may still hold for those manifolds. See Section \ref{SectionExamples} for more details. 

\begin{theorem}
Let $X_0$ be a smooth projective threefold such that $c_2(X_0).\zeta >0$ for all non-zero movable $\zeta \in NS_{\mathbb{R}}(X_0)$. Let $p_1,\ldots ,p_n\in X_0$ be distinct points. Let $\pi _1:X_1\rightarrow X_0$ be the blowup of $X_0$ at $p_1,\ldots , p_n$. Let $D_1,\ldots , D_m\subset X_1$ be disjoint smooth curves, and $\pi _2:X_2\rightarrow X_1$ the blowup at $D_1,\ldots ,D_n$.

1) $X_2$ satisfies Condition B. 

2) Assume moreover that for any $j$, then $c_1(X_1).D_j\leq 2g_j-2$, where $g_j$ is the genus of $D_j$. Then $X_2$ satisfies Condition A.  
\label{Theorem2}\end{theorem}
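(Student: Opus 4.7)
\emph{Strategy.} The plan is to push a putative nef class $\zeta \in Nef(X_2)$ down to $X_0$ via $\pi = \pi_1 \circ \pi_2$ and exploit the positivity hypothesis $c_2(X_0) \cdot \alpha > 0$ for nonzero movable $\alpha$. I would decompose
\[
\zeta = \pi^* \beta + \sum_{i=1}^n a_i\, \pi_2^* E_i + \sum_{j=1}^m b_j\, F_j,
\]
with $\beta = \pi_* \zeta \in Pic_{\mathbb{R}}(X_0)$, where the $E_i$ are the exceptional divisors of $\pi_1$ and the $F_j$ those of $\pi_2$. Because $\zeta$ is nef and $\pi$ is a composition of smooth blowups, $\beta$ is movable on $X_0$ by the paper's definition (taking $Z = X_2$). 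Hence if I can establish $c_2(X_0) \cdot \beta \leq 0$, the hypothesis on $X_0$ forces $\beta = 0$, and it remains only to show that the residual class $\sum a_i \pi_2^* E_i + \sum b_j F_j$ lies in $\mathbb{R} \cdot Pic_{\mathbb{Q}}(X_2)$.

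\emph{Reducing to a sign question.} By the projection formula,
\[
\zeta \cdot c_2(X_2) \;=\; \beta \cdot c_2(X_0) \;+\; \Theta(a_i,b_j),
\]
where $\Theta$ collects the purely exceptional contributions. Using the standard blowup formulas $c_1(X_1) = \pi_1^* c_1(X_0) - 2\sum E_i$ and $c_1(X_2) = \pi_2^* c_1(X_1) - \sum F_j$, together with the corresponding formula for $c_2$ and Grothendieck's relation on each ruled surface $F_j = \mathbb{P}(N_{D_j/X_1})$, the quantity $\Theta$ is an explicit polynomial in $a_i, b_j$ with coefficients determined by $g_j = g(D_j)$, $c_1(X_1) \cdot D_j$, and the intersection numbers recording which curves $D_j$ meet which point-exceptional divisors $E_i$. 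The central task is to show $\Theta \geq 0$ under the respective hypotheses; combined with $\zeta \cdot c_2(X_2) \leq 0$ this yields $\beta \cdot c_2(X_0) \leq 0$ and hence $\beta = 0$.

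\emph{The two parts.} For Part 1 the equality $\zeta \cdot c_1(X_2) = 0$ gives a linear identity linking $\beta \cdot c_1(X_0)$ to the coefficients $a_i, b_j$; substituting into $\Theta$ and using nefness of $\zeta$ together with $\zeta^2 = 0$ should yield $\Theta \geq 0$. For Part 2 only the weaker inequality $\zeta \cdot c_1(X_2)^2 \geq 0$ is available; the computation of $\zeta \cdot c_1(X_2)^2$ produces, for each $F_j$, a contribution whose sign is controlled by $(2g_j - 2) - c_1(X_1) \cdot D_j$, and the hypothesis $c_1(X_1) \cdot D_j \leq 2g_j - 2$ is exactly what makes this contribution non-negative, again delivering $\Theta \geq 0$. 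Once $\beta = 0$ is in hand, the remaining relations on $(a_i, b_j)$ become rational; $\zeta^2 = 0$ together with nefness then confines the coefficient vector to a rational ray, placing $\zeta$ in $\mathbb{R} \cdot Pic_{\mathbb{Q}}(X_2)$.

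\emph{Main obstacle.} The technical heart of the argument is the sign analysis of $\Theta$: one must keep careful track of the mixed terms coming from intersections of $\pi_2^* E_i$ with $F_j$ (nonzero precisely when $D_j$ passes through the point $p_i$, which the hypothesis permits) and verify that, after imposing $\zeta^2 = 0$ and the appropriate $c_1$ condition, every contribution has the correct sign. In Part 2 the computation genuinely closes up only thanks to the adjunction-type bound $c_1(X_1) \cdot D_j \leq 2g_j - 2$ (equivalently, a bound on $\deg N_{D_j/X_1}$), which is the one place where the extra hypothesis is essential; in Part 1 this inequality is not needed because the exact relation $\zeta \cdot c_1(X_2) = 0$ already pins down the exceptional coefficients tightly enough.
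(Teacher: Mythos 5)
Your overall strategy coincides with the paper's: decompose the nef class over the blowup tower, isolate the exceptional contribution to $\zeta\cdot c_2(X_2)$, prove it is non-negative, and conclude that the movable pushforward to $X_0$ pairs non-positively with $c_2(X_0)$ and hence vanishes. However, the entire content of the proof is the term-by-term sign analysis that you defer ("should yield $\Theta\geq 0$"), and the mechanism you propose for it in Part 2 would fail. The inequality $\zeta\cdot c_1(X_2)^2\geq 0$ is a single scalar inequality; it cannot control the sign of each of the $m$ separate contributions attached to the divisors $F_j$, and in fact the paper never uses it. The correct tool, which is the missing idea here, is to intersect the \emph{class} equation $\zeta^2=0$ (an identity in $H^{2,2}$) with each $F_j$ individually. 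Because the $D_j$ are pairwise disjoint, writing $\zeta=\pi_2^*(\xi)-\sum_j\alpha_jF_j$ with $\alpha_j\geq 0$ and $\xi=(\pi_2)_*\zeta$ movable on $X_1$, the relation $\zeta^2\cdot F_j=0$ reads $2\alpha_j\,\xi\cdot D_j-\alpha_j^2\gamma_j=0$ with $\gamma_j=c_1(X_1)\cdot D_j+2g_j-2$, so either $\alpha_j=0$ or $\xi\cdot D_j=\alpha_j\gamma_j/2$. In the latter case the $j$-th term of your $\Theta$, namely $\xi\cdot D_j-\alpha_j\,c_1(X_1)\cdot D_j$, equals $\frac{\alpha_j}{2}\bigl[(2g_j-2)-c_1(X_1)\cdot D_j\bigr]$, and it is exactly the hypothesis $c_1(X_1)\cdot D_j\leq 2g_j-2$ that makes it non-negative. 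In the case $\alpha_j=0$ you must instead pair $\zeta$ with an effective section $D_j'\subset F_j$ of the ruled surface $F_j\to D_j$ to get $\xi\cdot D_j=\zeta\cdot D_j'\geq 0$; this use of a section is another ingredient absent from your outline. For Part 1 the analogous localization intersects both $\zeta^2=0$ and $\zeta\cdot c_1(X_2)=0$ with each $F_j$, which pins down $\xi\cdot D_j=\alpha_j\,c_1(X_1)\cdot D_j$ exactly, so the $j$-th term vanishes when $\alpha_j\neq 0$; your phrase "a linear identity linking $\beta\cdot c_1(X_0)$ to the coefficients" suggests a global relation, which again would not suffice.

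Two smaller points. First, your endgame is heavier than necessary: once the movable pushforward to $X_0$ vanishes, nefness forces $\xi=0$ and then $\zeta=0$ outright (a nef class equal to minus an effective divisor class is zero), so there is no residual "rational ray" analysis to perform; the paper proves the stronger conclusion $\zeta=0$. Second, your one-step decomposition down to $X_0$ with mixed terms $\pi_2^*E_i\cdot F_j$ is workable but introduces bookkeeping the paper avoids by working in two stages ($X_2\to X_1$ first, then pushing $\xi$ to $X_0$ only at the very end, using $c_2(X_1)=\pi_1^*c_2(X_0)$); the interactions between the $D_j$ and the point-exceptional divisors $E_i$ never need to be tracked explicitly.
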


Theorem \ref{Theorem2} applies for $X_0=\mathbb{P}^3$ or $\mathbb{P}^2\times \mathbb{P}^1$ or $\mathbb{P}^1\times \mathbb{P}^1\times \mathbb{P}^1$. It also applies for complete intersection threefolds in $\mathbb{P}^N$. See Section \ref{SectionExamples} for more details. We note that here the images in $X_0$ of $D_1,\ldots ,D_n$ may be singular and intersect with each other, hence Theorem \ref{Theorem2} is not covered by Theorem \ref{Theorem1} even in the case $X_0=\mathbb{P}^3$.

Finally, we state several results which are purely inductive in nature, which can be applied to blowups of Fano threefolds as well. Here we recall that a threefold is Fano if $c_1(X)$ is ample.  

We start with the case of point blowups. 
\begin{theorem}
Let $Y$ be a smooth projective threefold satisfying one of the Conditions A and B. Let $\pi :X\rightarrow Y$ be the blowup at a point. Then $X$ satisfies the same Condition. 
\label{Theorem3}\end{theorem}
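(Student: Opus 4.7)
The plan is to decompose any candidate class $\zeta\in Nef(X)$ along the blowup and reduce Condition A (resp.~B) on $X$ to the corresponding condition on $Y$. Write $E\cong\mathbb{P}^{2}$ for the exceptional divisor and $j\colon E\hookrightarrow X$ for its inclusion, so that $Pic_{\mathbb{R}}(X)=\pi^{*}Pic_{\mathbb{R}}(Y)\oplus\mathbb{R}\cdot E$. Two calculations will be used repeatedly: (i) for any classes $\alpha,\beta$ on $Y$ one has $E^{3}=1$, $\pi^{*}\alpha\cdot E^{2}=0$ and $\pi^{*}\alpha\cdot\pi^{*}\beta\cdot E=0$, all consequences of the fact that $\pi^{*}(\cdot)$ restricts to zero on $E$; and (ii) for any cycle class of the form $j_{*}\gamma$ one has $\pi^{*}\alpha\cdot j_{*}\gamma=j_{*}((\pi\circ j)^{*}\alpha\cdot\gamma)=0$, since $\pi\circ j$ is the constant map to the centre of the blowup.

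Given $\zeta\in Nef(X)$ satisfying the hypotheses of Condition A or B, write $\zeta=\pi^{*}\eta+aE$ with $\eta\in Pic_{\mathbb{R}}(Y)$. Intersecting $\zeta^{2}=0$ with $E$ kills every cross-term by (i), leaving $0=\zeta^{2}\cdot E=a^{2}E^{3}=a^{2}$, so $a=0$ and $\zeta=\pi^{*}\eta$. Next, $\eta\in Nef(Y)$ and $\eta^{2}=0$: for any irreducible curve $C\subset Y$ with strict transform $\widetilde{C}\subset X$, the projection formula gives $\eta\cdot C=\pi^{*}\eta\cdot\widetilde{C}=\zeta\cdot\widetilde{C}\geq 0$; and for any divisor $D$ on $Y$, $\eta^{2}\cdot D=\zeta^{2}\cdot\pi^{*}D=0$, so $\eta^{2}=0$ numerically.

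For the Chern-class conditions, I will use $c_{1}(X)=\pi^{*}c_{1}(Y)-2E$ together with the fact that $c_{2}(X)-\pi^{*}c_{2}(Y)$ is supported on $E$, i.e.\ lies in $j_{*}A^{1}(E)$. Combined with (i)--(ii), these yield the translations
\[\zeta\cdot c_{1}(X)^{2}=\eta\cdot c_{1}(Y)^{2},\qquad \zeta\cdot c_{2}(X)=\eta\cdot c_{2}(Y),\]
and, after pushing forward, $\pi_{*}(\zeta\cdot c_{1}(X))=\eta\cdot c_{1}(Y)$. Consequently, the hypotheses of Condition A (resp.~B) on $X$ transfer verbatim to those of Condition A (resp.~B) on $Y$ for the class $\eta$. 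By the assumed condition on $Y$, $\eta\in\mathbb{R}\cdot Pic_{\mathbb{Q}}(Y)$, and hence $\zeta=\pi^{*}\eta\in\mathbb{R}\cdot Pic_{\mathbb{Q}}(X)$, as required.

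The main technical point will be verifying that $c_{2}(X)-\pi^{*}c_{2}(Y)$ is supported on $E$. In our specific setting this can be checked directly from the splitting $T_{X}|_{E}=T_{\mathbb{P}^{2}}\oplus\mathcal{O}_{\mathbb{P}^{2}}(-1)$ and the standard formulas for Chern classes of blowups; more conceptually, it is an instance of the general localization principle for cycle classes, since $\pi$ is an isomorphism away from $E$ and any class restricting to zero on the open complement lies in $j_{*}A^{*}(E)$. Everything else in the argument is routine bookkeeping with the blowup intersection calculus.
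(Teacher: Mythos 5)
Your proposal is correct and follows essentially the same route as the paper: decompose $\zeta=\pi^{*}\eta+aE$, use $\zeta^{2}=0$ to kill the coefficient of $E$, and then transfer the nefness, $\eta^{2}=0$, and the Chern-class inequalities down to $Y$ via $c_{1}(X)=\pi^{*}c_{1}(Y)-2E$ and $c_{2}(X)=\pi^{*}c_{2}(Y)$. The only cosmetic differences are that you get $a=0$ by computing $\zeta^{2}\cdot E=a^{2}$ rather than by the paper's linear-independence argument for $\pi^{*}(\eta^{2})$ and $E^{2}$ in $H^{2,2}(X)$, and that your ``supported on $E$'' localization step is subsumed by the exact identity $c_{2}(X)=\pi^{*}c_{2}(Y)$ already recorded in Section~\ref{SectionPreliminaries}.
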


Next, we consider the case of curve blowups. 
\begin{theorem}
Let $Y$ be a smooth projective threefold satisfying Condition A or B. Let $\pi :X\rightarrow Y$ be the blowup at a smooth curve $C\subset Y$. Let $g$ be the genus of $C$, and define $\gamma =c_1(Y).C+2g-2$. Then $X$ also satisfies the same Condition, if one of the following cases happens.

1) $c_1(Y).C$ is an odd number and the normal vector bundle $N_{C/Y}$ is decomposable. The latter means that $N_{C/Y}$ is the direct sum of two line bundles over $C$.

2) $\gamma <0$ and $C$ is not the only effective curve in its cohomology class. 

3) There is an irreducible hypersurface $S\subset Y$ such that  $2\kappa <\mu \gamma$. Here $\kappa =S.C$ and $\mu$ is the multiplicity of $C$ in $S$.
\label{Theorem4}\end{theorem}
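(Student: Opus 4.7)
The plan is to decompose a nef class $\zeta \in Nef(X)$ satisfying the hypotheses of Condition A (resp.~Condition B) as $\zeta = \pi^*\eta - tE$ with $\eta \in Pic_{\mathbb{R}}(Y)$, force $t=0$ using the hypothesis of the appropriate case, and then verify that $\eta$ satisfies the corresponding Condition on $Y$. Pairing $\zeta$ with a fiber $f$ of the ruled surface $E = \mathbb{P}(N_{C/Y}) \to C$ and using $E\cdot f = -1$ gives $t = \zeta\cdot f \geq 0$. Once $t=0$ is known, $\zeta = \pi^*\eta$ is nef, so $\eta$ is nef on $Y$, and a projection-formula computation using the standard identities $E^3 = -\gamma$, $\pi_*(E^2) = -C$, $E\cdot c_2(X) = c_1(Y)\cdot C$, and $\pi_*c_2(X) = c_2(Y) + C$ (derived from the Euler sequence on $E \to C$ together with the birational invariance $c_1(X)c_2(X) = c_1(Y)c_2(Y)$) shows that $\eta$ satisfies the same Condition on $Y$. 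By hypothesis $\eta \in \mathbb{R}.Pic_{\mathbb{Q}}(Y)$, hence $\zeta \in \mathbb{R}.Pic_{\mathbb{Q}}(X)$.

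To rule out $t>0$, push $\zeta^2 = 0$ through $\pi$ and pair it separately with $E$ to obtain the two key identities $\eta^2 = t^2 C$ (as numerical classes of $1$-cycles on $Y$) and $\eta\cdot C = t\gamma/2$. Each of the three cases now furnishes an effective cycle on $X$ against which nefness of $\zeta$ yields a contradiction. In Case 1, the splitting $N_{C/Y} = L_1\oplus L_2$ gives two sections $\sigma_i = \mathbb{P}(L_i)\subset E$ of $E\to C$; a computation on the $\mathbb{P}^1$-bundle $E$ yields $E\cdot\sigma_i = \deg L_i$ and $\pi^*\eta\cdot\sigma_i = \eta\cdot C$, so nefness forces $\eta\cdot C \geq t\deg L_i$ for both $i$, and summing against $\eta\cdot C = t\gamma/2 = t(\deg L_1+\deg L_2)/2$ forces $\deg L_1 = \deg L_2$; thus $c_1(Y)\cdot C = \deg L_1 + \deg L_2$ is even, contradicting the parity hypothesis. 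In Case 2, an effective curve $C' \neq C$ with $[C'] = [C]$ has strict transform $\tilde C'$ satisfying $\zeta\cdot\tilde C' = \eta\cdot C - t(C'\cdot_Y C)$, and since $C'\cdot_Y C \geq 0$, substituting $\eta\cdot C = t\gamma/2$ gives $\gamma \geq 2(C'\cdot_Y C) \geq 0$, contradicting $\gamma<0$. In Case 3, the strict transform $\tilde S = \pi^*S - \mu E$ is effective and $\tilde S\cdot E$ is an effective $1$-cycle (proper intersection of two effective divisors), so nefness gives $\zeta\cdot\tilde S\cdot E = \mu\eta\cdot C + t(\kappa - \mu\gamma) \geq 0$; substituting $\eta\cdot C = t\gamma/2$ simplifies this to $t(2\kappa - \mu\gamma) \geq 0$, contradicting $2\kappa < \mu\gamma$ for $t>0$.

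The main technical obstacle is establishing and tracking the network of intersection-number identities above: the Chern-class formulas $E\cdot c_2(X) = c_1(Y)\cdot C$ and $\pi_*c_2(X) = c_2(Y) + C$ require the Euler sequence on $E = \mathbb{P}(N_{C/Y})$ together with the invariance $\chi(\mathcal{O}_X) = \chi(\mathcal{O}_Y)$, while the Case 2 argument uses the formula $\zeta\cdot\tilde D = \eta\cdot D - t(D\cdot_Y C)$ for the strict transform of any effective curve $D\subset Y$ distinct from $C$. Handling reducible representatives of $[C]$ in Case 2, and the potential singularities of $\tilde S$ in Case 3, presents no extra difficulty since the arguments use only effectivity of the test cycles rather than smoothness or irreducibility.
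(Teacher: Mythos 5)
Your proposal is correct and follows the paper's own strategy almost step for step: the same decomposition $\zeta=\pi^*\xi-\alpha F$, the same reduction of the $\alpha=0$ case to the Condition on $Y$ via $\pi_*c_2(X)=c_2(Y)+C$ and $\zeta.c_1(X)^2=\xi.c_1(Y)^2-\xi.C$ (just make sure to invoke $\xi.C\geq 0$ when transferring the sign conditions), the same key identity $\xi.C=\alpha\gamma/2$ from $\zeta^2.F=0$, and in Cases 2) and 3) the same test cycles ($\pi^*(D)$ versus your strict transform, and $\widetilde{S}.F$) leading to the same contradictions. The one genuine divergence is Case 1): the paper passes to Hartshorne's normalized bundle $\mathcal{E}'=N_{C/Y}\otimes\mathcal{M}$, uses that a decomposable normalized bundle has $\tau=c_1(\mathcal{E}')\leq 0$ (hence $<0$ by parity), and tests $\zeta$ against the single canonical section $C_0$ to get $\zeta.C_0=\alpha\tau/2<0$; you instead test against both sub-bundle sections $\mathbb{P}(L_1),\mathbb{P}(L_2)$, observe that the two inequalities $\xi.C\geq\alpha\deg L_i$ sum to the exact identity $2\xi.C=\alpha(\deg L_1+\deg L_2)$, and deduce $\deg L_1=\deg L_2$, contradicting oddness of $c_1(Y).C$. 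Your version is a clean, self-contained alternative that avoids the normalization step and the appeal to Hartshorne V.2.12; the paper's version isolates the invariant $\tau$ of the ruled surface, which it reuses elsewhere. The intersection numbers you assert ($E.\sigma_i=\deg L_i$, $\pi^*\xi.\sigma_i=\xi.C$) check out against the paper's conventions ($e.M=-1$, $e.e=-c_1(\mathcal{E})$).
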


We note that in 1) of Theorem \ref{Theorem4}, the condition that $N_{C/Y}$ is decomposable may be easily to satisfy. For example, if $C$ is a smooth rational curve, then $N_{C/Y}$ is always decomposable by a result of Grothendieck, even if $C$ does not move in $Y$.

\begin{theorem}
Let $Y$ be a smooth projective threefold satisfying Condition B. Let $\pi :X\rightarrow Y$ be the blowup at a smooth curve $C\subset Y$. Let $g$ be the genus of $C$. If $c_1(Y).C\not= 2g-2$, then $X$ also satisfies Condition B.
\label{Theorem5}\end{theorem}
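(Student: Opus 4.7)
The plan is to reduce Condition B on $X$ to Condition B on $Y$ via the blowup decomposition. Any class on $X$ decomposes uniquely as $\zeta=\pi^*\alpha+tE$ with $\alpha\in Pic_{\mathbb{R}}(Y)$ and $t\in\mathbb{R}$, where $E$ is the exceptional divisor. I will use the standard curve-blowup intersection rules: $\pi^*\beta\cdot E^2=-\beta\cdot C$, $E^3=-\gamma$ where $\gamma=c_1(Y)\cdot C+2g-2$, $c_1(X)=\pi^*c_1(Y)-E$, and the formula $c_2(X)=\pi^*c_2(Y)+\pi^*[C]-\pi^*c_1(Y)\cdot E$ for the second Chern class of a codimension-two blowup.

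The heart of the argument is forcing $t=0$. The hypotheses $\zeta^2=0$ and $\zeta\cdot c_1(X)=0$ are numerical, so each pairs to zero against every divisor class on $X$. Pairing $\zeta\cdot c_1(X)$ with $E$ produces the scalar equation $\alpha\cdot C=t(2-2g)$, while pairing $\zeta^2$ with $E$ produces $2t\,\alpha\cdot C+t^2\gamma=0$. Substituting the first into the second collapses to $t^2\bigl(c_1(Y)\cdot C-(2g-2)\bigr)=0$. The hypothesis $c_1(Y)\cdot C\neq 2g-2$ then forces $t=0$, and consequently also $\alpha\cdot C=0$.

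Once $t=0$, the class $\zeta=\pi^*\alpha$ is a pullback, and the descent to $Y$ is straightforward. Nefness of $\pi^*\alpha$ on $X$ transfers to nefness of $\alpha$ on $Y$ via the projection formula applied to strict transforms of curves in $Y$. Pairing $\zeta^2$ and $\zeta\cdot c_1(X)$ against pullback divisors $\pi^*\beta$ for $\beta\in Pic_{\mathbb{R}}(Y)$ shows $\alpha^2=0$ and $\alpha\cdot c_1(Y)=0$ numerically on $Y$; and from $\zeta\cdot c_2(X)\leq 0$ together with the $c_2(X)$ formula, $t=0$, and $\alpha\cdot C=0$, we obtain $\alpha\cdot c_2(Y)\leq 0$. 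Condition B applied to $Y$ now yields $\alpha\in\mathbb{R}\cdot Pic_{\mathbb{Q}}(Y)$, and pulling back gives $\zeta=\pi^*\alpha\in\mathbb{R}\cdot Pic_{\mathbb{Q}}(X)$.

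The main obstacle is the bookkeeping of the pairings: the hypotheses $\zeta^2=0$ and $\zeta\cdot c_1(X)=0$ are equalities of degree-four classes, so each one encodes several scalar constraints (one per basis element of $Pic_{\mathbb{R}}(X)$), and one must isolate precisely the two pairings with $E$ whose combination produces the factor $c_1(Y)\cdot C-(2g-2)$. This explains why the numerical hypothesis of the theorem is sharp in this approach. After that, the descent step is essentially mechanical.
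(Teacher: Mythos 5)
Your proposal is correct and follows essentially the same route as the paper: decompose $\zeta$ along the exceptional divisor, intersect the two equalities $\zeta^2=0$ and $\zeta\cdot c_1(X)=0$ with $E$ using $E^3=-\gamma$ and $\pi^*\beta\cdot E^2=-\beta\cdot C$, combine them to produce the factor $c_1(Y)\cdot C-(2g-2)$ forcing $t=0$, and then descend to Condition B on $Y$. The only differences are cosmetic (sign convention on the coefficient of $E$ and the order of substitution between the two scalar equations).
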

Hence, we conclude that if $X_0$ is a smooth threefold which is Fano or has Picard number $1$, then for almost every $X\rightarrow X_0$ a finite composition of points or smooth curves, every automorphism $f$ on $X$ has $\lambda _1(f)=\lambda _2(f)$. This is a strong indication that probably all automorphisms on such manifolds are not primitive, i.e. has invariant fibrations over a base of dimension $1$ or $2$. 

In Section \ref{SectionPossibleApplication} we will discuss possible application of the above results to the Ueno's threefold considered in \cite{ueno}. In Section \ref{SectionExamples}, we will give various examples illustrating the above results. 

{\bf Remark.} The general case of compact K\"ahler threefolds can be similarly treated, by replacing the Neron-Severi group by the $(1,1)$ cohomology group. After the appearance of a first version of this paper (see \cite{truong}), some generalizations to higher dimensions have been given in \cite{bayraktar-cantat} and \cite{truong1}.

{\bf Acknowledgements.} The author is grateful to Tien-Cuong Dinh for his suggestion that the answer to Question 1 is negative. The author has been benefited from helpful discussions and correspondences with Ekaterina Amerik, Turgay Bayraktar, Eric Bedford, Frederic Campana, Igor Dolgachev, Mattias Jonsson, Jan-Li Lin Viet-Anh Nguyen, Keiji Oguiso, Yuri Prokhorov, Roland Roeder and Konstantin Shramov. 

\section{Preliminaries on nef classes and blowups}
\label{SectionPreliminaries}

\subsection{K\"ahler, nef and psef classes, and effective varieties}

Let $X$ be a compact K\"ahler manifold. Let $\eta \in H^{1,1}(X)$. We say that $\eta $ is K\"ahler if it can be represented by a K\"ahler $(1,1)$ form. We say that $\eta $ is nef if it is a limit of a sequence of K\"ahler classes. We say that $\eta$ is psef if it can be represented by a positive closed $(1,1)$ current. A class $\xi \in H^{p,p}(X)$ is an effective variety if there are irreducible varieties $C_1,\ldots ,C_t$ of codimension $p$ in $X$ and non-negative real numbers $a_1,\ldots ,a_t$ so that $\xi$ is represented by $\sum _{i}a_iC_i$.

Demailly and Paun \cite{demailly-paun} gave a characterization of K\"ahler and nef classes, which in the case of projective manifolds is summarized as follows:
\begin{theorem}
Let $X$ be a projective manifold with a K\"ahler $(1,1)$ form $\omega$. A class $\eta \in H^{1,1}(X)$ is K\"ahler if and only for any irreducible subvariety $V\subset X$ then $\int _V\eta ^{dim(V)}>0$.  A class $\eta \in H^{1,1}(X)$ is nef if and only for any irreducible subvariety $V\subset X$ then $\int _V\eta ^{dim(V)-j}\wedge \omega ^j\geq 0$ for all $0\leq j\leq dim (V)$.  
\label{TheoremDemaillyPaun}\end{theorem}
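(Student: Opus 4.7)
The plan is to split into the easy necessity and the harder sufficiency, and to reduce the nef characterization to the K\"ahler one by a perturbation.

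For necessity I would pick a K\"ahler representative $\omega_\eta$ of $\eta$; on the smooth locus of any irreducible subvariety $V$, the form $\omega_\eta^{\dim V}$ is a positive volume form, so $\int_V \eta^{\dim V} > 0$. For the nef necessity, write $\eta = \lim_k \eta_k$ with $\eta_k$ K\"ahler; each $\int_V \eta_k^{\dim V - j}\wedge \omega^j$ is a mixed intersection of K\"ahler classes, hence strictly positive, and letting $k \to \infty$ gives the required $\geq 0$.

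For the sufficiency of the nef characterization, given the mixed inequalities for $\eta$, I would perturb to $\eta_\varepsilon := \eta + \varepsilon \omega$ with $\varepsilon > 0$. A binomial expansion yields
\[
\int_V \eta_\varepsilon^{\dim V} = \sum_{j=0}^{\dim V} \binom{\dim V}{j} \varepsilon^j \int_V \eta^{\dim V - j}\wedge \omega^j,
\]
whose coefficients are all non-negative by hypothesis, with strictly positive leading term $\varepsilon^{\dim V}\int_V \omega^{\dim V}$. So $\int_V \eta_\varepsilon^{\dim V} > 0$ for every $V$, and by the K\"ahler characterization $\eta_\varepsilon$ is K\"ahler; letting $\varepsilon \to 0$ shows $\eta$ is a limit of K\"ahler classes, hence nef.

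The hard step is sufficiency in the K\"ahler characterization. My approach would be a continuity method along $\alpha_t := \eta + t\omega$ for $t \geq 0$, which is manifestly K\"ahler for large $t$. The crux is a boundary lemma: any class on the boundary $\partial \mathcal{K}$ of the K\"ahler cone satisfies $\int_V \alpha^{\dim V} = 0$ for some irreducible subvariety $V$. Granted this lemma, each $\alpha_t$ inherits the strict positivity from $\eta$, so $\alpha_t \notin \partial \mathcal{K}$ for any $t \geq 0$, and connectedness of the path forces $\alpha_0 = \eta$ to lie in the K\"ahler cone. For $\eta$ algebraic (in $NS_{\mathbb{R}}(X)$), the boundary lemma follows from Nakai--Moishezon for rational classes combined with openness of the ample cone. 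For transcendental $\eta$, I would follow \cite{demailly-paun}: obtain a closed positive $(1,1)$-current representing the boundary class as a weak limit of K\"ahler forms, apply Demailly's regularization to produce a current with analytic singularities, and extract via a mass-concentration argument an irreducible subvariety $V$ on which the top intersection vanishes. This transcendental step is the main obstacle, requiring delicate pluripotential theory to identify the obstructing subvariety.
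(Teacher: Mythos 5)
First, a point of comparison: the paper does not prove this statement at all --- it is quoted as a known result of Demailly--Paun \cite{demailly-paun} --- so there is no internal proof to measure yours against. Your outline does follow the actual structure of the Demailly--Paun argument: necessity by integrating positive forms, reduction of the nef statement to the K\"ahler one by perturbing with $\varepsilon\omega$, and a continuity method along $\eta+t\omega$ whose core is the existence of K\"ahler currents. The two necessity directions and the perturbation step are correct as written.

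There is, however, one concrete gap in your continuity method: the claim that ``each $\alpha_t$ inherits the strict positivity from $\eta$'' is not justified. Expanding
$\int_V(\eta+t\omega)^{\dim V}=\sum_j\binom{\dim V}{j}\,t^j\int_V\eta^{\dim V-j}\wedge\omega^j$,
only the $j=0$ term is controlled by your hypothesis; the mixed terms $\int_V\eta^{\dim V-j}\wedge\omega^j$ are exactly the quantities you are \emph{not} entitled to assume non-negative at this stage --- assuming them non-negative amounts to assuming $\eta$ nef, which is the other half of the theorem. This is precisely where projectivity enters in \cite{demailly-paun}: take $\omega$ to be a hyperplane class, so that $\omega^j|_V$ is represented by an effective algebraic cycle $\sum_i m_iZ_i$ (a generic complete intersection of $V$ with $j$ hyperplanes), whence $\int_V\eta^{\dim V-j}\wedge\omega^j=\sum_i m_i\int_{Z_i}\eta^{\dim Z_i}>0$ by the hypothesis applied to the components $Z_i$. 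With that repair the whole segment stays in the numerically positive set and your connectedness argument goes through. The remaining ``boundary lemma'' (a nef class with $\int_V\alpha^{\dim V}>0$ for all $V$ is K\"ahler, via mass concentration and regularization of currents) is the genuine content of \cite{demailly-paun}, and you only gesture at it; since the paper itself simply cites the theorem, deferring that step is defensible, but as a self-contained proof the proposal is incomplete there.
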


Nef classes are preserved under pullback by holomorphic maps. 
\begin{lemma}
Let $\pi :X\rightarrow Y$ be a holomorphic map between compact K\"ahler manifolds. Then $\pi ^*(H^{1,1}_{nef}(X))\subset H^{1,1}_{nef}(Y)$.
\label{LemmaPullbackNefClasses}\end{lemma}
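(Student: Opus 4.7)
The statement asserts that pulling back nef $(1,1)$-classes preserves nefness; reading it as $\pi^{*}(H^{1,1}_{nef}(Y))\subset H^{1,1}_{nef}(X)$ (the displayed subscripts appear to be a typo), the plan is to argue directly from the definition of a nef class as a limit of K\"ahler classes, using an $\varepsilon$-perturbation by a K\"ahler class on $X$ to upgrade semi-positivity of the pullback to strict positivity.

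First I would fix an auxiliary K\"ahler form $\omega_X$ on $X$ and take $\eta\in H^{1,1}_{nef}(Y)$. By definition there exists a sequence of K\"ahler classes $\alpha_n\in H^{1,1}(Y)$ with $\alpha_n\to\eta$ in $H^{1,1}(Y)$. Represent each $\alpha_n$ by a K\"ahler form $\tilde\alpha_n$ on $Y$. Then $\pi^{*}\tilde\alpha_n$ is a smooth closed real $(1,1)$-form on $X$ which, at every point, is a semi-positive $(1,1)$-form (since pullback of a positive $(1,1)$-form under a holomorphic map is positive, though possibly degenerate where $d\pi$ is not injective). Consequently, for every $\varepsilon>0$ the form
\begin{equation*}
\pi^{*}\tilde\alpha_n+\varepsilon\,\omega_X
\end{equation*}
is a genuine K\"ahler form on $X$, so its class $\pi^{*}\alpha_n+\varepsilon[\omega_X]$ lies in the K\"ahler cone of $X$.

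Next I would let $\varepsilon\to 0^{+}$ to conclude that $\pi^{*}\alpha_n$ lies in the closure of the K\"ahler cone, hence is nef on $X$; and then let $n\to\infty$ to conclude $\pi^{*}\eta$ is nef on $X$, using that $\pi^{*}\colon H^{1,1}(Y)\to H^{1,1}(X)$ is a continuous linear map and the nef cone is closed. This gives the inclusion.

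There is no serious obstacle here: the one subtle point, which is where the argument would fail if one tried to skip it, is that $\pi^{*}\tilde\alpha_n$ is in general only semi-positive (it degenerates along fibers of $\pi$ when $\pi$ is not a submersion, e.g.\ for blowups along the exceptional divisors), so one cannot assert directly that $\pi^{*}\alpha_n$ is K\"ahler. The $\varepsilon\,\omega_X$ perturbation is precisely what handles this, and the two successive limits (first $\varepsilon\to 0$, then $n\to\infty$) place $\pi^{*}\eta$ in the nef cone as desired. Alternatively, one could invoke the Demailly--Paun criterion (Theorem \ref{TheoremDemaillyPaun}) and verify the intersection inequalities on subvarieties $V\subset X$ by projecting to $\pi(V)\subset Y$, but the limit argument above is shorter and avoids case analysis on whether $\pi|_{V}$ is generically finite.
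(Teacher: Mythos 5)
Your proof is correct and follows essentially the same route as the paper's: reduce to the case of a K\"ahler class on $Y$, observe the pullback form is semi-positive, perturb by $\varepsilon\,\omega_X$ to land in the K\"ahler cone, and pass to the limit using closedness of the nef cone. You are also right that the subscripts in the statement are a typo and that the intended inclusion is $\pi^*(H^{1,1}_{nef}(Y))\subset H^{1,1}_{nef}(X)$.
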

\begin{proof}
Since nef classes are in the closure of K\"ahler classes, it suffices to show that if $\eta$ is a K\"ahler class then $\pi ^*(\eta )$ is nef. Let $\varphi $ be a K\"ahler $(1,1)$ form representing $\eta$. Then $\pi ^*(\varphi )$ is a positive smooth $(1,1)$ form. Let $\omega _X$ be a K\"ahler $(1,1)$ form on $X$. Then $\pi ^*(\eta )$ is represented as a limit of the following K\"ahler classes
\begin{eqnarray*} 
\pi ^*(\varphi )+\frac{1}{n}\omega _X,
\end{eqnarray*}
and hence is nef.
\end{proof}

{\bf Remark:} Similarly, it can be shown that psef classes are preserved under pushforward by holomorphic maps. However, nef classes may not be preserved under pushforwards, even when the map is a blowup.  

\subsection{Blowup of a projective $3$-manifold at a point}

Let $\pi :X\rightarrow Y$ be the blowup of a projective $3$-manifold at a point $p$. 
Let $E= \mathbb{P}^2$ be the exceptional divisor and let $L\subset E$ be a line. Then $H^{1,1}(X)$ is generated by $\pi ^*(H^{1,1}(Y))$ and $E$, and $H^{2,2}(X)$ is generated by $\pi ^*(H^{2,2}(Y))$ and $L$. The intersection product on the cohomology of $X$ is given by 
\begin{eqnarray*}
\pi ^*(\xi ).E=0,~E.E=-L,\\
\pi ^*(\xi ).L=0,~E.L=-1. 
\end{eqnarray*}

The first and second Chern classes of $X$ can be computed by (see e.g. Section 6, Chapter 4 in the book of Griffiths-Harris \cite{griffiths-harris})
\begin{eqnarray*}
c_1(X)&=&\pi ^*(c_1(Y))-2E,\\
c_2(X)&=&\pi ^*(c_2(Y)).
\end{eqnarray*}

The following result concerns the relations between cycles on $X$ and $Y$.
\begin{lemma}
 For any effective curve $V\subset Y$, there is an effective curve $\widetilde{V}\subset X$ so that $\pi _*(\widetilde{V})=V$ and $\widetilde{V}.E\geq 0$.  
\label{LemmaPushforwardCyclesBlowupPoint}\end{lemma}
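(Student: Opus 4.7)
My plan is to take $\widetilde{V}$ to be simply the strict transform of $V$ under the blowup $\pi$. More precisely, writing an effective $1$-cycle on $Y$ as $V = \sum_{i} a_i V_i$ with $a_i \geq 0$ and $V_i \subset Y$ distinct irreducible curves, I define $\widetilde{V_i}$ as the Zariski closure of $\pi^{-1}(V_i \setminus \{p\})$ in $X$ (which coincides with $\pi^{-1}(V_i)$ if $p \notin V_i$), and set $\widetilde{V} = \sum_{i} a_i \widetilde{V_i}$. From the definition of strict transform each $\widetilde{V_i}$ is an irreducible curve in $X$ with $\pi_*(\widetilde{V_i}) = V_i$ as cycles, so $\pi_*(\widetilde{V}) = V$ and $\widetilde{V}$ is effective.

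The point remaining is $\widetilde{V}.E \geq 0$. Since each component $\widetilde{V_i}$ maps onto the curve $V_i$, rather than to the single point $p = \pi(E)$, the irreducible curve $\widetilde{V_i}$ cannot be contained in $E$. Hence $\widetilde{V_i}$ and $E$ meet properly in a finite set, and $\widetilde{V_i}.E$ equals the length of the scheme-theoretic intersection, a non-negative integer. Combining with the non-negative coefficients $a_i$ yields $\widetilde{V}.E = \sum_{i} a_i (\widetilde{V_i}.E) \geq 0$.

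I do not anticipate a real obstacle here: the statement is essentially a restatement of the fact that an irreducible curve not contained in an effective divisor has non-negative intersection with that divisor. The only care needed is in handling the case where $V_i$ passes through $p$, to confirm that the strict transform is again an irreducible curve (rather than a reducible or non-reduced scheme); this is standard for the blowup of a smooth threefold at a smooth point, where the strict transform of an irreducible subvariety remains irreducible. Once that point is noted, the inequality is automatic from proper intersection theory on the smooth projective threefold $X$.
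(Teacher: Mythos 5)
Your proposal is correct and follows essentially the same route as the paper: take the strict transform of each irreducible component, observe it is not contained in $E$, and conclude $\widetilde{V}.E\geq 0$ from proper intersection. The extra care you take in reducing to irreducible components and checking the pushforward is exactly the (implicit) content of the paper's one-line argument.
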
  
\begin{proof}
It suffices to consider the case when $V$ is an irreducible curve. We can choose $\widetilde{V}$ to be the strict transform of $V$. Then $\pi _*(\widetilde{V})=V$, and $\widetilde{V}$ is not contained in $E$. Therefore $\widetilde{V}.E\geq 0$.  
\end{proof}

We end this subsection showing that nef classes are preserved under pushforward by point-blowups. 
\begin{lemma}
Let $\eta \in H^{1,1}_{nef}(X)$. Then $\pi _*(\eta )\in H^{1,1}_{nef}(Y)$.
\label{LemmaPushforwardNefBlowupPoint}\end{lemma}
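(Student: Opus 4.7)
The plan is to invoke Kleiman's criterion: to prove $\pi_*(\eta)$ is nef, it suffices to verify $\pi_*(\eta).V\geq 0$ for every irreducible curve $V\subset Y$.

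First, using the stated presentation of $H^{1,1}(X)$, any class decomposes uniquely as
\begin{equation*}
\eta = \pi^*(\alpha)+bE,\qquad \alpha:=\pi_*(\eta)\in H^{1,1}(Y),\ b\in \mathbb{R},
\end{equation*}
where uniqueness follows by applying $\pi_*$ and using $\pi_*\pi^*=\mathrm{id}$ together with $\pi_*(E)=0$. To pin down the sign of $b$, I would test the nef class $\eta$ against a line $L\subset E$: since $\pi_*(L)=0$, the projection formula gives $\pi^*(\alpha).L=\alpha.\pi_*(L)=0$, while $E.L=-1$ is recorded in the excerpt, so $\eta.L=-b$, and nef-ness forces $b\leq 0$.

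Next, for an irreducible curve $V\subset Y$, I would apply Lemma \ref{LemmaPushforwardCyclesBlowupPoint} to produce an effective curve $\widetilde{V}\subset X$ with $\pi_*(\widetilde{V})=V$ and $\widetilde{V}.E\geq 0$. Applying the projection formula again,
\begin{equation*}
\eta.\widetilde{V}=\pi^*(\alpha).\widetilde{V}+b(E.\widetilde{V})=\alpha.V+b(E.\widetilde{V}),
\end{equation*}
which rearranges to
\begin{equation*}
\pi_*(\eta).V=\alpha.V=\eta.\widetilde{V}-b(E.\widetilde{V}).
\end{equation*}
The first summand is non-negative because $\eta$ is nef and $\widetilde{V}$ is effective, and the second is non-negative because $b\leq 0$ and $E.\widetilde{V}\geq 0$. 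Thus $\pi_*(\eta).V\geq 0$, completing the argument.

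The two essential ingredients---and the content that prevents this from being a triviality---are the controlled lift provided by Lemma \ref{LemmaPushforwardCyclesBlowupPoint}, which lets one lift $V$ to a curve meeting $E$ non-negatively, and the sign bound $b\leq 0$ coming from the negative intersection $E.L=-1$. As the remark following Lemma \ref{LemmaPullbackNefClasses} warns, pushforward of nef classes under general blowups may fail to be nef, so the argument must (and does) exploit features specific to the point-blowup, namely the clean decomposition of $H^{1,1}(X)$ and the negative self-intersection of a line inside the exceptional $\mathbb{P}^2$.
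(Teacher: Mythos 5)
Your intersection-theoretic computation is correct as far as it goes: the decomposition $\eta=\pi^*(\alpha)+bE$ with $\alpha=\pi_*(\eta)$, the sign bound $b\leq 0$ obtained by testing $\eta$ against a line $L\subset E$ (using $\pi^*(\alpha).L=0$ and $E.L=-1$), and the inequality $\pi_*(\eta).V=\eta.\widetilde{V}-b(E.\widetilde{V})\geq 0$ via the strict transform of Lemma \ref{LemmaPushforwardCyclesBlowupPoint} all check out against the intersection table of Section \ref{SectionPreliminaries}. The gap is in the very first reduction. The lemma is stated for $H^{1,1}_{nef}(X)$, where nef means a limit of K\"ahler classes, and $\pi_*(\eta)$ may be transcendental, i.e.\ not in $NS_{\mathbb{R}}(Y)$. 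The criterion ``nef iff non-negative on every curve'' is valid only for classes in $Pic_{\mathbb{R}}$; for a general real $(1,1)$-class it fails. For instance, on a projective K3 surface of Picard number one with ample generator $h$, a class $\epsilon h+\tau$ with $\tau$ transcendental, $\tau^2<0$ and $\epsilon$ small has positive intersection with every curve yet negative self-intersection, hence is not nef. In this generality the correct numerical characterization is Theorem \ref{TheoremDemaillyPaun}, which requires $\pi_*(\eta)^{\dim(V)-j}.V.\omega_Y^j\geq 0$ for every irreducible subvariety $V$ and every $j$; your linear argument gives no control over the higher-degree tests such as $\pi_*(\eta)^2.S$ for surfaces $S\subset Y$ or $\pi_*(\eta)^3$.

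This is exactly where your route diverges from the paper's. The paper reduces to $\eta$ K\"ahler, represents it by a K\"ahler form $\varphi$, observes that $\pi_*(\varphi)$ is a positive closed $(1,1)$-current smooth off the blown-up point, and invokes the intersection theory of positive currents from Demailly's book to identify each quantity $\pi_*(\eta)^{\dim(V)-j}.V.\omega_Y^j$ with the mass of a positive measure, thereby verifying the full Demailly--Paun criterion. If you restrict the statement to $\eta\in NS_{\mathbb{R}}(X)$ --- which covers every use of the lemma in the main results, since the paper works with $Nef(X)\subset Pic_{\mathbb{R}}(X)$ --- then your argument is complete and is a genuinely more elementary, purely algebraic alternative. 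As a proof of the lemma as stated, however, it is missing the positivity of the quadratic and cubic intersection tests.
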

\begin{proof}
It suffices to prove the conclusion when $\eta$ is a K\"ahler class. Let $\varphi$ be a K\"ahler $(1,1)$ form representing $\eta$. Then $\pi _*(\varphi )$ is a positive closed $(1,1)$ current, which is smooth on $X-p$. 

Let $\omega _Y$ be a K\"ahler $(1,1)$ form on $Y$. To show that $\pi _*(\eta )$ is a nef class, by Theorem \ref{TheoremDemaillyPaun} it suffices to show that for any irreducible variety $V\subset Y$ then $\pi _*(\eta )^{dim(V)-j}.V.\omega _Y^j\geq 0$ for $0\leq j\leq dim (V)$. We let $[V]$ be the current of integration on $V$. Then by the results in Section 4, Chapter 3 in the book of Demailly \cite{demailly}, the current $\pi _*(\varphi )^{dim(V)-j}\wedge [V]\wedge \omega _Y^j$ is well-defined and is a positive measure, whose mass equals to $\pi _*(\eta )^{dim(V)-j}.V.\omega _Y^j$. Thus the latter quantity is non-negative.  
\end{proof}

\subsection{Blowup of a projective $3$-manifold along a smooth curve}
Let $\pi :X\rightarrow Y$ be the blowup of a projective $3$-manifold along a smooth curve $C\subset Y$. Let $g$ be the genus of $C$. Let $F$ be the exceptional divisor and let $M$ be a fiber of the projection $F\rightarrow C$. We can identify $F$ with the projective bundle $\mathbb{P}(\mathcal{E})\rightarrow C$, where $\mathcal{E}=N_{C/Y}\rightarrow C$ is the normal vector bundle of $C$ in $Y$.   

Then $H^{1,1}(X)$ is generated by $\pi ^*(H^{1,1}(Y))$ and $F$, and $H^{2,2}(X)$ is generated by $\pi ^*(H^{2,2}(Y))$ and $M$. The intersection between $F$ and $M$ is $F.M=-1$. The first and second Chern classes of $X$ can be computed as follows:
\begin{eqnarray*}
c_1(X)&=&\pi ^*(c_1(Y))-F,\\
c_2(X)&=&\pi ^*(c_2(Y)+C)-\pi ^*c_1(Y).F.
\end{eqnarray*}

Let $[F]\rightarrow X$ be the line bundle of $F$ in $X$, and denote by $e=[F]|_F$. Then (see e.g. Section 6, Chapter 4 in the book of Griffiths - Harris \cite{griffiths-harris}) in $F$ we have the equalities 
\begin{eqnarray*}
e.M=-1,~e.e=-c_1(\mathcal{E}).
\end{eqnarray*}  
From the SES of vector bundles on $C$ 
\begin{eqnarray*}
0\rightarrow T_C\rightarrow T_Y|_C\rightarrow \mathcal{E}\rightarrow 0,
\end{eqnarray*}
it follows by the additivity of first Chern classes that 
\begin{eqnarray*}
c_1(\mathcal{E})=c_1(T_Y).C-c_1(T_C)=c_1(Y).C+2g-2.
\end{eqnarray*}
We define 
\begin{eqnarray*}
\gamma :=c_1(Y).C+2g-2.
\end{eqnarray*} 
Since $F\rightarrow C$ is a ruled surface (i.e. its fibers are projective lines $\mathbb{P}^1$), there is a canonical section $C_0$ which is the image of a holomorphic map $\sigma _0:C\rightarrow F$ (see e.g. Section 2, Chapter 5 in Hartshorne's book \cite{hartshorne}). Therefore $C_0$ is an effective curve in $F$. Such a $C_0$ has intersection $1$ with a fiber $M$.

We will return to the canonical section $C_0$ at the end of this subsection. For now, we however work in a more general assumption on $C_0$, for using later. That is, we consider an effective curve $C_0\subset F$ with the following properties  
\begin{eqnarray*}
C_0.C_0&=&\tau ,\\
C_0.M&=&\mu >0,\\
M.M&=&0.
\end{eqnarray*}   
Any divisor on $F$ is numerically equivalent to a linear combination of $C_0$ and $M$. We now show the following 

\begin{lemma}

a) 
\begin{equation}
F.C_0=\frac{1}{2}(\gamma \mu -\frac{\tau}{\mu}).
\label{Equation1}\end{equation}

b) \begin{eqnarray*}
F.F=-\frac{1}{\mu}C_0+\frac{1}{2}(\frac{\tau}{\mu ^2}+\gamma )M.
\end{eqnarray*}

c) $\pi _*(F.F)=-C$.
\label{LemmaIntersectionOnF}\end{lemma}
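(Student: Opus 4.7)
The approach is entirely computational inside the ruled surface $F=\mathbb{P}(\mathcal{E})$, whose group of divisor classes modulo numerical equivalence is spanned by $C_0$ and $M$. The strategy is to determine the class $e=[F]|_F$ as a combination of $C_0$ and $M$ and then read off all three statements from that one formula.

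First I would write $e=aC_0+bM$ and impose the two relations recorded just before the lemma, namely $e.M=-1$ and $e.e=-c_1(\mathcal{E})=-\gamma$, together with the hypotheses $C_0.C_0=\tau$, $C_0.M=\mu$, $M.M=0$. Intersecting with $M$ gives $a\mu=-1$, hence $a=-1/\mu$; expanding $e.e$ then yields the scalar equation $a^2\tau+2ab\mu=-\gamma$, which solves to $b=\tfrac{1}{2}(\tau/\mu^2+\gamma)$. This is precisely the identity in (b), viewed either as a divisor identity on $F$ or, under the inclusion $F\hookrightarrow X$, as the corresponding identity of $1$-cycle classes in $X$, since $F.F$ in $X$ is represented by the divisor $e$ on $F$.

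Part (a) then follows immediately by intersecting the formula for $e$ with $C_0$: one computes $F.C_0=e.C_0=a\tau+b\mu$ and simplifies to $\tfrac{1}{2}(\gamma\mu-\tau/\mu)$. For part (c), I would use that $\pi|_F:F\to C$ contracts every fiber, so $\pi_*(M)=0$, while the induced finite morphism $C_0\to C$ has degree equal to $C_0.M=\mu$, so $\pi_*(C_0)=\mu C$. Applying $\pi_*$ to the formula from (b) then gives $-\tfrac{1}{\mu}(\mu C)+0=-C$.

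I do not foresee any real obstacle: the whole argument amounts to solving two linear equations in two unknowns and then applying a pushforward. The one small subtlety worth spelling out is the degree identification used in (c), namely the general fact that for an irreducible curve $D$ on a ruled surface with fiber class $M$, the degree of the restricted morphism $D\to C$ equals the intersection number $D.M$. Once that is noted, the three parts of the lemma drop out simultaneously from the single decomposition of $e$ obtained in the first step.
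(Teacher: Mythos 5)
Your proposal is correct and follows essentially the same route as the paper: both determine $e=[F]|_F=aC_0+bM$ from the two relations $e.M=-1$ and $e.e=-c_1(\mathcal{E})=-\gamma$, obtaining $a=-1/\mu$ and $b=\tfrac{1}{2}(\tau/\mu^2+\gamma)$, and then deduce a) by intersecting with $C_0$ and c) by pushing forward using $\pi_*(M)=0$ and $\pi_*(C_0)=\mu C$. The only difference is cosmetic (you derive b) first and a) second, and you spell out the degree identification behind $\pi_*(C_0)=\mu C$, which the paper leaves implicit).
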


\begin{proof} 

a) In fact, we have
\begin{eqnarray*}
F.C_0=[F]|_{C_0}=[F]|_{F}.C_0=e.C_0,
\end{eqnarray*}
here the two expressions on the RHS are computed in $F$. On $F$, numerically we can write $e=aC_0+bM$. Then from $-1=e.M=(aC_0+bM).M=a\mu$, we get $a=-1/\mu$. Substitute  this into $e.e=-\gamma$ we obtain 
\begin{eqnarray*}
-\gamma =e.e=(\frac{1}{\mu}C_0-bM).(\frac{1}{\mu}C_0-bM)=\frac{\tau}{\mu ^2}-2b, 
\end{eqnarray*} 
which implies that 
\begin{eqnarray*}
b=\frac{1}{2}(\frac{\tau}{\mu ^2}+\gamma ). 
\end{eqnarray*}

Therefore 
\begin{eqnarray*}
e=\frac{-1}{\mu}C_0+\frac{1}{2}(\frac{\tau}{\mu ^2}+\gamma )M.
\end{eqnarray*}
Thus
\begin{eqnarray*}
F.C_0&=&e.C_0=[\frac{-1}{\mu}C_0+\frac{1}{2}(\frac{\tau}{\mu ^2}+\gamma )M]C_0\\
&=&\frac{-\tau}{\mu}+\frac{1}{2}(\frac{\tau}{\mu}+\gamma \mu )\\
&=&\frac{1}{2}(-\frac{\tau}{\mu}+\gamma \mu ).
\end{eqnarray*}

b) From the formula for $e$ in the proof of a) it is not difficult to arrive at the proof of b).

c) Since $C_0.M=\mu$, it follows that $\pi _*(C_0)=\mu C$. Then from b) we obtain c). 
\end{proof}

We end this subsection commenting on conditions 2) and 3) of Theorem \ref{TheoremAutomorphismBlowupP3}. By Proposition 2.8 in Chapter 5 of \cite{hartshorne}, there is a line bundle $\mathcal{M}\rightarrow C$ so that the vector bundle $\mathcal{E}'=\mathcal{E}\otimes \mathcal{M}$ is normalized in the following sense: $H^0(\mathcal{E}')\not= 0$ but for all line bundle $\mathcal{L}\rightarrow C$ with $c_1(\mathcal{L})<0$ then $H^0(\mathcal{E}'\otimes \mathcal{L})=0$. A canonical section $C_0\subset F$ can be associated to such a normalized $\mathcal{E}'$. The intersection between $C_0$ and $M$ is $1$. Moreover, the number 
\begin{eqnarray*}
\tau _0=C_0.C_0=c_1(\mathcal{E}')=c_1(\mathcal{E})+2c_1(\mathcal{M}),   
\end{eqnarray*}
is an invariant of $F$.

We end this section with some further properties of a ruled surface. 

\begin{lemma}
Assume that the invariant $\tau _0$ of $F$ is non-negative. Then

a) For any effective curve $V\subset F$ we have $V.V\geq 0$.

b) If moreover $\gamma <0$ then for any non-zero effective curve $V\subset F$ we have $F.V<0$.
\label{LemmaCaseTauNonNegative}\end{lemma}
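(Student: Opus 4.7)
The plan is to classify irreducible effective curves on $F$ into three types — the canonical section $C_0$, a fiber $M$, or an irreducible curve different from both — and to treat each type using the intersection formulas of Lemma \ref{LemmaIntersectionOnF} together with one input from classical ruled-surface theory. For a reducible effective $V=\sum_i a_i V_i$ (with $a_i>0$ and $V_i$ distinct and irreducible) I then pass to the general case: in part a), $V.V$ expands into the sum of the $V_i.V_i$ and non-negative cross terms $V_i.V_j$ for $i\neq j$ (two distinct irreducible curves on a surface intersect non-negatively); in part b), $F.V=\sum_i a_i F.V_i$ by linearity. So the task reduces to the irreducible case.

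The key input is Hartshorne's classification (Chapter V, Proposition 2.21 of \cite{hartshorne}): if $V\subset F$ is irreducible, not equal to $C_0$ and not a fiber, and $V\equiv aC_0+bM$, then $a\geq 1$, and translating Hartshorne's invariant $e=-c_1(\mathcal{E}')=-\tau_0$ to the paper's convention, the two subcases of Hartshorne's bound combine, under the hypothesis $\tau_0\geq 0$, into the single inequality
\[
b\geq -\tfrac{1}{2}\,a\tau_0.
\]
This is the one non-trivial fact driving both parts.

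For part a), I expand $V.V=(aC_0+bM)^2=a^2\tau_0+2ab$; the displayed inequality gives $2ab\geq -a^2\tau_0$ (since $a\geq 1>0$), so $V.V\geq 0$. The degenerate cases $V=C_0$ and $V=M$ contribute $\tau_0\geq 0$ and $0$ respectively. For part b), I specialize Lemma \ref{LemmaIntersectionOnF}(b) to the canonical section ($\mu=1$, $\tau=\tau_0$), which gives
\[
e=[F]|_F=-C_0+\tfrac{1}{2}(\tau_0+\gamma)M,
\]
so that $F.V=e.V$ computed on $F$ equals $-\tfrac{a\tau_0}{2}-b+\tfrac{a\gamma}{2}$. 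For $V=M$ this is $-1$; for $V=C_0$ it is $(\gamma-\tau_0)/2<0$ since $\gamma<0$ and $\tau_0\geq 0$; and for $V$ of the third type the bound $-b\leq \tfrac{a\tau_0}{2}$ yields $F.V\leq \tfrac{a\gamma}{2}<0$ because $a\geq 1$ and $\gamma<0$. Summing over components then gives $F.V<0$ for every non-zero effective $V$.

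The only real obstacle is bookkeeping the two sign conventions for the invariant of a ruled surface, so that Hartshorne's two separate bounds $b\geq ae$ (when $e\geq 0$) and $2b\geq ae$ (when $e<0$) collapse into the single inequality above in the paper's notation; once this dictionary is fixed, both parts are short arithmetic from formulas already in Lemma \ref{LemmaIntersectionOnF}.
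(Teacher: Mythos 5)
Your proposal is correct and follows essentially the same route as the paper: reduce to irreducible curves, treat $C_0$ and $M$ separately, and for the remaining irreducible curves invoke Hartshorne's Chapter V classification (Propositions 2.20/2.21, which you package into the single bound $b\geq -\tfrac{1}{2}a\tau_0$) together with the intersection formulas of Lemma \ref{LemmaIntersectionOnF}. The only cosmetic differences are that you spell out the reduction to the irreducible case and merge the paper's case split on $\tau_0=0$ versus $\tau_0>0$ into one inequality.
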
  
\begin{proof}

a) It suffices to prove for the case $V$ is an irreducible curve. Numerically, we write $V=aC_0+bM$. If $V=C_0$ then $V.V=\tau _0\geq 0$. If $V=M$ then $V.V=0$. Hence we may assume that $V\not= C_0,M$. 

We consider two cases:

Case 1: $\tau _0=0$. By Proposition 2.20 in Chapter 5 in \cite{hartshorne}, we have $a>0$ and $b\geq 0$. Therefore
\begin{eqnarray*}
 V.V=a^2\tau _0+2ab\geq 0.
\end{eqnarray*}

Case 2: $\tau _0>0$. By Proposition 2.21 in Chapter 5 in \cite{hartshorne}, there are two subcases:

Subcase 2.1: $a=1,b\geq 0$. Then 
\begin{eqnarray*}
V.V=\tau _0+2b\geq 0.
\end{eqnarray*}

Subcase 2.2: $a\geq 2,b\geq -a\tau _0/2$. Then
\begin{eqnarray*}
V.V=a^2\tau _0+2ab\geq a^2\tau _0+2a(-a\tau _0/2)=0.
\end{eqnarray*}

b) It suffices to prove for the case $V$ is an irreducible curve. If $V=M$ then $F.M=-1<0$. If $V=C_0$ then by Lemma \ref{LemmaIntersectionOnF} with $\tau =\tau _0\geq 0$ and $\mu =1$
\begin{eqnarray*}
F.C_0=\frac{1}{2}(\gamma -\tau _0)\leq \frac{1}{2}\gamma <0
\end{eqnarray*}
because $\gamma <0$. Therefore we may assume that $V\not= C_0,M$, and then proceed as in the proof of a). 
\end{proof}

\section{Proofs of the main results}
\label{SectionMainResults}

We make use of the following result (see e.g. \cite{zhang3} and \cite{bedford}).
\begin{lemma}
Let $X$ be a smooth projective threefold. Let $f:X\rightarrow X$ be an automorphism. If $\lambda _1(f)>1$, then $\lambda _1(f)$ is irrational.  
\label{LemmaLambdaIrrational}\end{lemma}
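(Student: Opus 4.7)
The plan is to exploit the fact that an automorphism $f$ induces a $\mathbb{Z}$-linear isomorphism of $Pic(X)$, with inverse $(f^{-1})^*$. Hence $f^*|_{Pic(X)}$ is represented, in any integral basis of the free part of $Pic(X)$, by a matrix in $GL_n(\mathbb{Z})$ with determinant $\pm 1$. Its characteristic polynomial $\chi(t)$ is therefore monic in $\mathbb{Z}[t]$ with $|\chi(0)|=1$.

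Next I would invoke the Perron--Frobenius argument on the nef cone already used in the Introduction: since $f^*$ preserves the salient cone $Nef(X)$, there exists a non-zero nef class $\zeta$ with $f^*\zeta=\lambda_1(f)\zeta$. This realises $\lambda_1(f)$ as a positive real eigenvalue of $f^*$ acting on $Pic_{\mathbb{R}}(X)$, hence as a root of $\chi(t)$, and in particular as an algebraic integer.

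Now suppose for contradiction that $\lambda_1(f)$ is rational. Being a rational algebraic integer, it must lie in $\mathbb{Z}$, and the hypothesis $\lambda_1(f)>1$ then forces $\lambda_1(f)\geq 2$. By Gauss's lemma one factors $\chi(t)=(t-\lambda_1(f))q(t)$ in $\mathbb{Z}[t]$ with $q$ monic. Evaluating at $t=0$ gives $\chi(0)=-\lambda_1(f)\,q(0)$, so $\lambda_1(f)\cdot|q(0)|=1$; but this is impossible since $\lambda_1(f)\geq 2$ and $q(0)\in\mathbb{Z}$. The only non-trivial ingredient is the Perron--Frobenius step, i.e.\ verifying that $\lambda_1(f)$ is genuinely an eigenvalue on $Pic_{\mathbb{R}}(X)$ rather than merely the spectral radius of some larger cohomological action; but this is precisely what underlies the construction of $\zeta$ already discussed in the Introduction, so no new work is needed. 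I note in passing that the argument uses only that $f^*$ is a lattice automorphism preserving a salient cone, and never the assumption $\dim X=3$, so the same conclusion holds for automorphisms of projective manifolds of arbitrary dimension.
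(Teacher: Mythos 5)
Your argument is correct and is essentially identical to the paper's proof of Lemma \ref{LemmaLambdaIrrational}: both realize $\lambda_1(f)$ as a real eigenvalue of the integral matrix of $f^*$ on the N\'eron--Severi lattice, note that the characteristic polynomial is monic with constant term $\pm 1$ since $f^*$ is invertible over $\mathbb{Z}$, and derive a contradiction from the factorization $P(0)=-\lambda_1(f)Q(0)$ when $\lambda_1(f)$ is a rational (hence integral) number exceeding $1$. Your explicit attention to the Perron--Frobenius step and the remark that the argument is dimension-free are accurate but add nothing beyond what the paper already uses.
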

 For the convenience of the readers, we reproduce the proof of this Lemma here. 
 \begin{proof}
 Let $A$ be the matrix of $f^*:NS_{\mathbb{R}}(X)\rightarrow NS_{\mathbb{R}}(X)$, then $A$ is an integer matrix, and $\lambda _1(f)$ is a real eigenvalue of $A$. Moreover, $A$ is invertible and its inverse $A^{-1}$ is the matrix of the map $(f^{-1})^*:NS_{\mathbb{R}}(X)\rightarrow NS_{\mathbb{R}}(X)$ hence is also an integer matrix. Therefore $det(A)=\pm 1$. Thus the characteristic polynomial $P(x)$ of $A$ is a monic polynomial of integer coefficients and $P(0)=\pm 1$. Assume that $\lambda _1(f)$ is a rational number. Since $\lambda _1(f)$ is an algebraic integer, it follows that $\lambda _1(f)$ must be an integer. Then we can write $P(x)=(x-\lambda _1(f))Q(x)$, here $Q(x)$ is a polynomial of integer coefficients. If $\lambda _1(f)>1$ we get a contradiction $\pm 1=P(0)=-\lambda _1(f)Q(0)$
 \end{proof}

Now we give the proofs of the main results. 

\begin{proof}[Proof of Theorem \ref{TheoremNonExistenceClassB}]
1)  Since $f^*$ preserves the cone $Nef (X)$, by a Perron-Frobenius type theorem, there is a non-zero nef class $\eta $ so that $f^*(\eta )=\lambda _1(f)\eta $. Similarly, there is a non-zero nef class $\eta _{-}$ so that $(f^{-1})^*(\eta _{-})=\lambda _1(f^{-})\eta _{-}$. 

Assume that $\lambda _1(f)>1$. By the log-concavity of dynamical degrees, we also have $\lambda _1(f^{-1})>1$.  By Lemma \ref{LemmaLambdaIrrational}, both $\lambda _1(f)$ and $\lambda _1(f^{-1})$ are irrational. Hence, both $\zeta$ and $\zeta _{-}$ are not in $\mathbb{R}.NS_{\mathbb{Q}}(X)$. It is easy to see that
\begin{eqnarray*}
\zeta .c_1(X)^2&=&\zeta .c_2(X)=0,\\
\zeta _{-}.c_1(X)^2&=&\zeta _{-}.c_2(X)=0.
\end{eqnarray*}

To prove 1) it suffices to show that either $\zeta ^2=0$ or $\zeta _{-}^2=0$. Assume otherwise. From $\zeta ^2\not=0$ and $f^*(\zeta ^2)=\lambda _1(f)^2\zeta ^2$, we have $$\lambda _1(f)^2\leq \lambda _2(f)=\lambda _1(f^{-1}).$$
Similarly, from $\zeta _{-}^2\not= 0$, we have
$$\lambda _1(f^{-1})\geq \lambda _1(f)^2.$$ 
Combining these two inequalities, we conclude that $\lambda _1(f)\geq \lambda _1(f)^4$, which contradicts to $\lambda _1(f)>1$. This completes the proof of 1). 

2) The proof of 2) is similar. 
\end{proof}

\begin{proof}[Proof of Theorem \ref{TheoremResolutionTheMapJ}]
1) For the proof, it suffices to show that for any non-zero nef $\zeta$ on $X=X_2$ then either $\zeta .c_1(X)^2\not= 0$ or $\zeta .c_2(X)\not= 0$.

We let $E_1,\ldots ,E_n$ be the exceptional divisors of the blowup $\pi _1:X_1\rightarrow X_0=\mathbb{P}^3$. Let  $F_{i,j}$ be the exceptional divisors of the blowup $\pi _2:X=X_2\rightarrow X_1$. Then we can write 
\begin{eqnarray*}
\zeta &=&\pi _2^*(\xi )-\sum _{i<j}\alpha _{i,j}F_{i,j},\\
\xi &=&\pi _1^*(u)-\sum _{l}\beta _lE_l.
\end{eqnarray*}
Here $u$ is nef on $\mathbb{P}^3$ and $\alpha _{i,j},\beta _l\geq 0$. 

For the proof of 1), it then suffices to show that $\deg (u)=0$.  From 
\begin{eqnarray*}
c_2(X)=\pi _2^*c_2(X_1)+\sum _{i<j}\pi _2^*D_{i,j}-\sum _{i<j}\pi _2^*c_1(X_1).F_{i,j},
\end{eqnarray*}
and the fact that $c_1(X_1).D_{i,j}=0$, the condition $\zeta .c_2(X)=0$ becomes
$\xi .c_2(X_1)+\sum _{i<j}\xi .D_{i,j}=0$. Since $c_2(X_1)=\pi _1^*(c_2(\mathbb{P}^3))$, it follows that $\xi .c_2(X_1)=16 \deg (u)$. We also have that $\xi .D_{i,j}=\deg (u)-\beta _i-\beta _j$ for every $i<j$. Therefore, we obtain
\begin{eqnarray*}
6\deg (u)&=&-\sum _{i<j}\xi .D_{i,j},\\
(6+\frac{n(n-1)}{2})\deg (u)&=&(n-1)\sum _l\beta _l.
\end{eqnarray*}

From the condition $\zeta .c_1(X)^2=0$, we obtain
\begin{eqnarray*}
0&=&\zeta .c_1(X)^2=(\pi _2^*(\xi )-\sum _{i<j}\alpha _{i,j}F_{i,j}).(\pi _2^*c_1(X_1)^2-2\sum _{i<j}\pi _2^*c_1(X_1).F_{i,j}+\sum _{i<j}F_{i,j}^2)\\
&=&\xi . c_1(X_1)^2-\sum _{i<j}\xi .D_{i,j}-2\sum _{i<j}\alpha _{i,j}c_1(X_1).D_{i,j}+\sum _{i<j}\alpha _{i,j}(c_1(X_1).D_{i,j}+2g_{i,j}-2)\\
&=&22 \deg (u)-4\sum _l\beta _l+\sum _{i<j}\alpha _{i,j}(2g_{i,j}-2-c_1(X_1).D_{i,j})\\
&=&22\deg (u)-4\sum _{l}\beta _l-2\sum _{i<j}\alpha _{i,j}.
\end{eqnarray*}
In the above, $g_{i,j}=0$ is the genus of $C_{i,j}$, and $c_1(X_1).D_{i,j}=0$ for all $i<j$. In particular, we obtain
\begin{equation}
\frac{11}{2}\deg (u)\geq \sum _{l}\beta _l=(\frac{6}{n-1}+\frac{n}{2})\deg (u).
\label{Equation3}\end{equation}
From the above inequality, we will finish showing that $\deg (u)=0$. We consider several cases:

Case 1: $n\geq 10$. From Equation (\ref{Equation3}), it follows immediately that $\deg (u)=0$ as wanted. 

Case 2: $6\leq n\leq 9$. In this case, for each $6$ points $p_{i_1},\ldots ,p_{i_6}$ among $n$ points $p_1,\ldots ,p_n$, there is a unique rational normal curve $C\subset \mathbb{P}^3$ of degree $3$ passing through the $6$ chosen points. Let $D\subset X_1$ be the  strict transform of $C$. Then $D$ is different from the curves $D_{i,j}$. Therefore $\pi _2^*D$ is an effective curve, and hence $$3\deg (u)-\sum _{l=1}^6\beta _{i_l}\geq \xi .D=\zeta .\pi _2^*(D)\geq 0.$$  Summing over all such choices of $p_{i_1},\ldots ,p_{i_n}$ we find that
\begin{eqnarray*}
\frac{n}{2}\deg (u)\geq \sum _{l}\beta _l.
\end{eqnarray*}  
Combining this with $$\sum _l\beta _l=(\frac{6}{n-1}+\frac{n}{2})\deg (u),$$
we obtain $\deg (u)=0$.

Case 3: $n=4,5$. In this case, we use rational normal curves to obtain
\begin{eqnarray*}
\frac{n}{3}\deg (u)\geq \sum _l\beta _l.
\end{eqnarray*}
Combining this with 
\begin{eqnarray*}
\sum _{l}\beta _l=(\frac{6}{n-1}+\frac{n}{2})\deg (u),
\end{eqnarray*}
we obtain $\deg (u)=0$.

Case 4: $n=1,2,3$. In this case we have $n\deg (u)\geq \sum _l\beta _l$. Combining this with 
\begin{eqnarray*}
\sum _{l}\beta _l=(\frac{6}{n-1}+\frac{n}{2})\deg (u),
\end{eqnarray*}
we obtain $\deg (u)=0$.
  \end{proof}

\begin{proof} (Proof of Theorem \ref{Theorem1}) 

Let $\pi _1':X_1'\rightarrow X_0$ be the blowup at the $C_1,\ldots ,C_t$. Let $F_1,\ldots ,F_t$ be the exceptional divisors. Let $M_j=(\pi _1')^{-1}(p_j)$ be the preimages  of the points $p_j$ ($j=1,\ldots ,n$). These are smooth rational curves, and are among the fibers of the maps $F_1\rightarrow C_1$,  $\ldots $ , $F_t\rightarrow C_t$. Let $\pi _2':X_2'\rightarrow X_1' $ be the blowup at the curves $M_j$. Then $X_2$ is isomorphic to $X_2'$. 

Fixed a number $j$. Let $i$ be such that $M_j\subset F_i$. Using that 
\begin{eqnarray*}
c_1(N_{M_j/X_1'})=c_1(N_{M_j/F_i})+c_1(N_{F_i/X_1}|_{M_j})=0+(-1)=-1,
\end{eqnarray*} 
we find that 
\begin{eqnarray*}
c_1(X_1').M_j=1
\end{eqnarray*}
is an odd number. Therefore, using either part 1) or part 3) of Theorem \ref{Theorem4}, for the proof of Theorem \ref{Theorem2} it suffices to show that $X_1'$ satisfies both Conditions A and B. To this end, we only need to show that  if $\zeta \in Nef (X_1') $ is such that $\zeta ^2=0$ then $\zeta \in \mathbb{R}.NS_{\mathbb{Q}}(X_1')$. 

Let $H\in NS_{\mathbb{Q}}(X_0)$ be an ample divisor. Since $X_0$ has Picard number $1$, we can write
\begin{eqnarray*}
\zeta =a(\pi _1')^*(H)-\sum _j\alpha _jF_j,
\end{eqnarray*}   
where $a,\alpha _1,\ldots ,\alpha _t\geq 0$. If $a=0$, then from the fact that $\zeta$ is nef, we have $\alpha _1=\ldots =\alpha _t=0$. Therefore, we may assume that $\alpha >0$, and after dividing by $\alpha $ we may assume that $\alpha =1$. Then, for the proof of the theorem, it suffices to show that all the numbers $\alpha _1,\ldots ,\alpha _t$ are in $\mathbb{Q}$.

Since the curves $C_j$ are pairwise disjoint, for any $i=1,\ldots ,t$ we have 
\begin{eqnarray*}
0&=&\zeta ^2.F_i=((\pi _1')^*H-\sum _j\alpha _jF_j)^2.F_i\\
&=&(\pi _1')^*H^2.F_i-2\alpha _i(\pi _1')^*H.F_i^2+\alpha _i^2F_i^3\\
&=&2\alpha _iH.C_i-\alpha _i^2(c_1(X_0).C_i+2g_i-2),
\end{eqnarray*}
here $g_i$ is the genus of $C_i$. We note that $H.C_i$ is a positive rational number. Hence, either $\alpha _i=0$, or
\begin{eqnarray*}  
\alpha _i=2H.C_i/(c_1(X_0).C_i+2g_i-2).
\end{eqnarray*}
In both cases, $\alpha _i$ are rational numbers as wanted.
\end{proof}

\begin{proof}[Proof of Theorem \ref{Theorem2}]
1) Let $\zeta $ be a nef class on $X_2$ such that $\zeta ^2=0$, $\zeta .c_1(X)=0$ and $\zeta .c_1(X_2)^2\leq 0$. We need to show that $\zeta \in \mathbb{R}.H^2_{alg}(X_2,\mathbb{Q})$. More strongly, we will show that $\zeta$ must be $0$.

Let us denote by $F_j$ the exceptional divisor over $D_j$ of the blowup $\pi _2:X_2\rightarrow X_1$.  We denote by $\pi _1:X_1\rightarrow X_0$ the blowup of $C_0$ at the points $p_i$.

We can write $\zeta =\pi _2^*(\xi )-\sum _{j}\alpha _jF_j$, where $\alpha _j\geq 0$ and $\xi$ is a movable class on $X_1$. Since $D_j$ are disjoint, by intersecting the equations $\zeta ^2=\zeta .c_1(X_2)=0$ with $F_j$, we find as in \cite{truong} that either $\alpha _j=0$ or 
\begin{eqnarray*}
\xi .D_j=\alpha _jc_1(X_1).D_j=\alpha _j(2g_j-2).
\end{eqnarray*}
If $\alpha _j=0$ then $$\xi .D_j=\zeta .D_j'\geq 0=\alpha _jc_1(X_1).D_j,$$ where $D_j'\subset F_j$ is a section whose pushforward is $D_j$. If $\alpha _j\not= 0$ then $\xi .D_j=c_1(X_1).D_j$. Therefore,
\begin{eqnarray*}
0\geq \zeta .c_2(X_2)&=&(\pi _2^*(\xi )-\sum _j\alpha _jF_j).(\pi _2^*c_2(X_1)+\sum _j(\pi _2^*D_j-\pi _2^*c_1(X_j).F_j))\\
&=&\xi .c_2(X_1)+\sum _j(\xi .D_j-\alpha _jc_1(X_1).D_j).
\end{eqnarray*}
 Since each term $\xi .D_j-\alpha _jc_1(X_1).D_j$ is non-negative, we find that $\xi .c_2(X_1)\leq 0$. Because $c_2(X_1)=\pi _1^*c_2(X_0)$, we then get that $(\pi _1)_*(\xi ).c_2(X_0)\leq 0$. Because $(\pi _1)_*(\xi )$  is movable in $X_0$, from the assumption on $c_2(X_0)$ we obtain $(\pi _1)_*(\xi )=0$. From this, it easy follows that $\xi$ and then $\zeta $ are $0$.    
 
2) The proof is similar to that of 1). The difference is now that here for each $j$, either $\alpha _j=0$ or $$\xi .D_j-\alpha _jc_1(X_1).D_j=\frac{\alpha _j}{2}[(2g_j-2)-c_1(X_1).D_j].$$
In the first case
\begin{eqnarray*}
\xi .D_j-\alpha _jc_1(X_1).D_j=\xi .D_j=\zeta .D_j'\geq 0,
\end{eqnarray*}
where $D_j'\subset F_j$ is a section. In the second case, by the assumption $(2g_j-2)-c_1(X_1).D_j\geq 0$, we also have $\xi .D_j-\alpha _jc_1(X_1).D_j\geq 0$. 

Hence,
\begin{eqnarray*}
0\geq -\sum _j(\xi .D_j-\alpha _jc_1(X_1).D_j)\geq \xi .c_2(X_1).
\end{eqnarray*}
Then we can proceed as before. 
\end{proof}

\begin{proof}[Proof of Theorem \ref{Theorem3}]
Let $F$ be the exceptional divisor of the blowup $\pi$. Let $\zeta$ be a nef class on $X$. Then we can write $\zeta =\pi ^*(\xi )-\alpha F$ for some $\alpha \geq 0$ and for some movable class $\xi =\pi _*(\zeta )$ on $Y$. 

Assume that $\zeta ^2=0$. Then, 
\begin{eqnarray*}
0=\zeta ^2=(\pi ^*(\xi )-\alpha F)^2=\pi ^*(\xi ^2)+\alpha ^2F^2.
\end{eqnarray*}
Here we used that $\pi ^*(\xi ).F=0$. Because the classes of $\pi ^*(\xi ^2)$ and $F^2$ are linearly independent in the $(2,2)$ cohomology group of $X$, from the above we have that $\alpha =0$. Then it follows that $\xi$ is nef on $Y$, and $\xi ^2=0$. Moreover, since $c_1(X)=\pi ^*c_1(Y)-2F$ and $c_2(X)=\pi ^*c_2(Y)$ (see Chapter 4 in \cite{griffiths-harris}), we have
\begin{eqnarray*}
\xi .c_1(Y)&=&\pi _*(\pi ^*(\xi ).\pi ^*(c_1(Y)))=\pi _*(\pi ^*(\xi ).(\pi ^*c_1(Y)-2F))=\pi _*(\zeta .c_1(X)),\\
\xi .c_1(Y)^2&=&\zeta .c_1(X)^2,\\
\xi .c_2(Y)&=&\zeta .c_2(X).
\end{eqnarray*}
Then, it follows easily that if $Y$ satisfies one of the Conditions A and B, then $X$ also satisfies the same Condition.
\end{proof}

\begin{proof}[Proof of Theorem \ref{Theorem4}] We will show that if $Y$ satisfies Condition A then $X$ also satisfies Condition A. The proof for Condition B is similar.  

Let $\zeta $ be a nef class on $X$. We need to show that if 
\begin{eqnarray*}
\zeta ^2&=&0,\\
\zeta .c_1(X)^2&\geq &0,\\
\zeta .c_2(X)&\leq &0,
\end{eqnarray*}
then $\zeta \in \mathbb{R}.NS_{\mathbb{Q}}(X)$.

Let $F$ be the exceptional divisor of the blowup $\pi :X\rightarrow Y$.  We can write $\zeta =\pi ^*(\xi )-\alpha F$ for some $\alpha \geq 0$. We also have (see Section \ref{SectionPreliminaries})
\begin{eqnarray*}
c_1(X)&=&\pi ^*c_1(Y)-F,\\
c_2(X)&=&\pi ^*c_2(Y)+\pi ^*C-\pi ^*c_1(Y).F,\\
\pi _*(F.F)&=&-C.
\end{eqnarray*}

We first consider the case $\alpha =0$. Then, $\xi$ is nef on $Y$ and moreover $\xi ^2=0$. We have in this case 
\begin{eqnarray*}
\pi _*(\zeta .c_1(X))&=&\pi _*(\pi ^*(\xi ). (\pi ^*c_1(Y)-F))=\xi .c_1(Y),\\
\zeta .c_1(X)^2&=&\pi ^*(\xi ).(\pi ^*c_1(Y)-F)^2=\pi ^*(\xi ).(\pi ^*c_1(Y)^2-2\pi ^*c_1(Y).F+F^2)\\
&=&\xi .c_1(Y)^2-\xi .C,\\
\zeta .c_2(X)&=&\pi ^*(\xi ).(\pi ^*c_2(Y)+\pi ^*C-\pi ^*c_1(Y).F)\\
&=&\xi .c_2(Y)+\xi .C.
\end{eqnarray*}
Since $\xi$ is nef and $C$ is an effective curve, we have $\xi .C\geq 0$. Therefore, from the assumptions $\zeta .c_1(X)^2\geq 0$ and $\zeta .c_2(Y)\leq 0$ we obtain
\begin{eqnarray*}
\xi ^2&=&0,\\
\xi .c_1(Y)^2&=&\zeta .c_1(X)^2+\xi .C\geq 0,\\
\xi .c_2(Y)&=&\zeta .c_2(X)-\xi .C\leq 0.
\end{eqnarray*}
Since $Y$ satisfies Condition A by assumption, it follows that $\xi \in \mathbb{R}.NS_{\mathbb{Q}}(Y)$. Then $\zeta =\pi ^*(\xi )\in \mathbb{R}.NS_{\mathbb{Q}}(X)$. Hence, $X$ also satisfies Condition A.

Now we show that under the assumptions of Theorem \ref{Theorem3}, then actually $\alpha$ must be $0$. Assume otherwise, i.e. that $\alpha >0$, we will obtain  a contradiction. We recall that $\gamma =c_1(Y).C+2g-2$. From the assumption that $\zeta ^2=0$ we have
\begin{eqnarray*}
0&=&\zeta ^2.F=(\pi ^*(\xi )-\alpha F)^2.F\\
&=&\pi ^*(\xi ^2).F-2\alpha \pi ^*(\xi ).F^2+F^3\\
&=&\alpha \xi .C-2\alpha ^2.\gamma .
\end{eqnarray*}
In the fourth equality we used the results in Section \ref{SectionPreliminaries}. The assumption that $\alpha >0$ implies that
\begin{eqnarray*}
\xi .C=\alpha .\gamma /2.
\end{eqnarray*}

We now proceed corresponding to parts 1), 2) and 3) of the theorem. 

1) In this case $c_1(Y).C$ is an odd number and $N_{C/Y}$ is decomposable. We have a SES of vector bundles over $C$:
\begin{eqnarray*}
0\rightarrow T_C\rightarrow T_Y|_C\rightarrow N_{C/Y}\rightarrow 0.
\end{eqnarray*}

From this, it follows that 
\begin{eqnarray*}
c_1(N_{C/Y})=c_1(Y).C+2g-2=\gamma .
\end{eqnarray*}

Recall that $F$ is the exceptional divisor of the blowup $\pi$. Then $F=\mathbb{P}(N_{C/X})\rightarrow C$ is a ruled surface over $C$. Hence, (see Proposition 2.8 in Chapter 5 in \cite{hartshorne}) there is a line bundle $\mathcal{M}$ over $C$ such that $\mathcal{E} =N_{C/Y}\otimes \mathcal{M}$ is normalized, in the sense that $H^0(\mathcal{E} )\not= 0$, but for every line bundle $\mathcal{L}$ with $c_1(\mathcal{L})<0$ then $H^0(\mathcal{E}\otimes \mathcal{L})=0$. 

Let $f$ be a fiber of the fibration $F\rightarrow C$. Then, (see Proposition 2.9 in Chapter 5 in \cite{hartshorne}), there is  a so-called zero section $C_0\subset F$ with the following properties: 
\begin{eqnarray*}
\tau :=C_0.C_0&=&c_1(\mathcal{E}),\\
C_0.f&=&1.
\end{eqnarray*}

Because $N_{C/Y}$ is decomposable, $\mathcal{E}$ is also decomposable. By part a) of Theorem 2.12 in Section 5 in \cite{hartshorne}, $c_1(\mathcal{E})\leq 0$. Moreover, from
\begin{eqnarray*}
c_1(\mathcal{E})=c_1(\mathcal{N_{C/Y}})+2c_1(\mathcal{M})=c_1(Y).C+2g-2+2c_1(\mathcal{M}),
\end{eqnarray*}
and the assumption that $c_1(Y).C$ is an odd number, we get that $c_1(\mathcal{E})<0$. Hence $\tau <0$. 

From the results in Section \ref{SectionPreliminaries}, we have 
\begin{eqnarray*}
C_0=-F.F+\frac{1}{2}(\tau +\gamma )f.
\end{eqnarray*}

Now we obtain the desired contradiction. Since $\zeta $ is nef and $C_0$ is an effective curve, we have $\zeta .C_0\geq 0$. Hence,
\begin{eqnarray*}
 0&\leq& (\pi ^*(\xi )-\alpha F).(-F.F+\frac{1}{2}(\tau +\gamma )f\\
 &=&\xi .\pi _*(-F.F)+\alpha F.F.F-\frac{1}{2}\alpha (\tau +\gamma )F.f\\
 &=&\xi .C-\alpha \gamma+\frac{1}{2}\alpha (\tau +\gamma )=\frac{\alpha \tau }{2}<0.
\end{eqnarray*}
In the above we used that $\pi _*(-F.F)=\pi _*(C_0)=C$ (see for example Lemma 4 in \cite{truong}), $F.f=-1$, $F.F.F=-\gamma$, $\xi .C=\alpha \gamma /2$ , $\alpha >0$ and $\tau =C_0.C_0< 0$. 

2) In this case, $\gamma <0$ and $C$ is not the only effective curve in its cohomology class. Let $D$ be another curve in the cohomology class of $C$. Since $C$ is irreducible, we can assume that $C$ is not contained in the support of $D$. Then $\pi ^*(D)$ is an effective curve in $X$. Since $\zeta $ is nef, we obtain a contradiction
\begin{eqnarray*}
0\leq \pi ^*(D).\zeta =D. \pi _*(\zeta )=D.\xi =C.\xi =\alpha \gamma /2 <0.
\end{eqnarray*}

3) In this case, there is an irreducible hypersurface $S\subset Y$ such that  $2\kappa <\mu \gamma$. Here $\kappa =S.C$ and $\mu$ is the multiplicity of $C$ in $S$. We now construct an effective curve $C_0\subset F$ and use it to derive a contradiction.

The strict transform $\widetilde{S}$ of $S$ is given by $\widetilde{S}=\pi ^*(S)-\mu F$, and is an irreducible hypersurface of $X$. Since $\widetilde{S}$ and $F$ are different irreducible hypersurfaces, their intersection $C_0=\widetilde{S}.F=(\pi ^*(S)-\mu F).F$ is an effective curve of $F$. We now compute the numbers $C_0.C_0$ and $C_0.M$. We have
\begin{eqnarray*}  
C_0.C_0&=&\widetilde{S}|_{F}.\widetilde{S}|_{F}=\widetilde{S}.\widetilde{S}.F\\
&=&(\pi ^*(S)-\mu F).(\pi ^*(S)-\mu F).F=-2\mu\pi ^*(S).F.F+\mu ^2F.F.F\\
&=&2\mu S.C-\mu ^2\gamma =2\mu \kappa -\mu ^2\gamma .
\end{eqnarray*}
Denote by $\tau =C_0.C_0$ and $\mu _0=C_0.M$. Note that $\mu _0\not=0$, otherwise we have $C_0$ is a multiplicity of $M$, and hence $\pi _*(C_0)=0$. But from the definition of $C_0$ we can see that $\pi _*(C_0)=\mu C\not= 0$. Then by the computations in Section \ref{SectionPreliminaries}, we have
\begin{eqnarray*}
F.F=-\frac{1}{\mu _0}C_0+\frac{1}{2}(\frac{\tau }{\mu _0^2}+\gamma )M.
\end{eqnarray*}
Pushforward this by the map $\pi$, using that $\pi _*(F.F)=-C$ and $\pi _*(C_0)=\mu C$ we have that $\mu _0=\mu$.  

From the above computation $\tau =2\mu \kappa -\mu ^2\gamma $, we obtain
\begin{eqnarray*}
F.C_0=\frac{1}{2}(\gamma \mu -\frac{\tau}{\mu})=\gamma \mu -\kappa .
\end{eqnarray*}

Because $\zeta$ is nef, it follows that 
\begin{eqnarray*}
0&\leq&\zeta .C_0=(\pi ^*(\xi )-\alpha F).C_0=\mu \xi .C-\frac{\alpha }{2}(\gamma \mu -\frac{\tau}{\mu}),\\
&=&\frac{\alpha}{2}\gamma \mu -\frac{\alpha}{2}(\gamma \mu -\frac{\tau}{\mu})=\frac{\alpha}{2}\frac{\tau}{ \mu} =\alpha (\kappa -\frac{1}{2}\gamma \mu  ).
\end{eqnarray*}
This contradicts the assumptions that $2\kappa <\gamma \mu$ and $\alpha >0$. 
\end{proof}

\begin{proof} [Proof of Theorem \ref{Theorem5}]

Let $\zeta $ be a nef class on $X$. We need to show that if 
\begin{eqnarray*}
\zeta ^2&=&0,\\
\zeta .c_1(X)&= &0,\\
\zeta .c_2(X)&\leq &0,
\end{eqnarray*}
then $\zeta \in \mathbb{R}.NS_{\mathbb{Q}}(X)$.

Let $F$ be the exceptional divisor of the blowup $\pi :X\rightarrow Y$.  We can write $\zeta =\pi ^*(\xi )-\alpha F$ for some $\alpha \geq 0$. As in the proof of Theorem \ref{Theorem4}, it suffices to show that $\alpha =0$. We assume otherwise that $\alpha >0$. Let $f\subset F$ be a fiber of the projection $F\rightarrow C$. We have 
\begin{eqnarray*}
0&=&\zeta .\zeta =(\pi ^*(\xi )-\alpha F).(\pi ^*(\xi )-\alpha F)\\
&=&\pi ^*(\xi .\xi )-2\alpha \pi ^*(\xi ).F+\alpha ^2F.F,\\
0&=&\zeta .c_1(X)=(p^*(\xi )-\alpha F).(\pi ^*c_1(Y)-F)\\
&=&\pi ^*(\xi .c_1(Y))-\pi ^*(\xi ).F-\pi ^*c_1(Y).F+\alpha F^2.
\end{eqnarray*}
Intersecting both of these equations with $F$, using $F.F.F=-\gamma$ and $\pi _*(F.F)=-C$, we obtain
\begin{eqnarray*}
&&2\alpha \xi .C-\alpha ^2\gamma =0,\\
&&\alpha c_1(Y).C+\xi .C-\alpha \gamma =0.
\end{eqnarray*}
Then we must have $\alpha =0$. Otherwise, dividing $2\alpha$ from the first equation we have that $\xi .C=\alpha \gamma /2$. Substituting this into the second equation and dividing by $\alpha$ we get $2c_1(Y).C=\gamma$. Hence $c_1(Y).C=2g-2$, which is a contradiction.
\end{proof}

\section{Examples}
\label{SectionExamples}
 
\subsection{The case $X_0=\mathbb{P}^2\times \mathbb{P}^1$}
The Picard number of $X_0$ is $2$. By K\"unneth's formula, $H^{1,1}(X_0)$ is generated by the classes of $\mathbb{P}^2\times \{pt\}$ and $\mathbb{P}^1\times \mathbb{P}^1$ (here $\{pt\}$ means a point). The intersection on $H^{1,1}(X_0)$ is 
\begin{eqnarray*}
\mathbb{P}^2\times \{pt\}.\mathbb{P}^2\times \{pt\}&=&0,\\
\mathbb{P}^2\times \{pt\}.\mathbb{P}^1\times \mathbb{P}^1&=&\mathbb{P}^1\times \{pt\},\\
\mathbb{P}^1\times \mathbb{P}^1.\mathbb{P}^1\times \mathbb{P}^1&=&\{pt\}\times \mathbb{P}^1.
\end{eqnarray*}
By K\"unneth's formula again, $H^{2,2}(X_0)$ is generated by $\mathbb{P}^1\times \{pt\}$ and $\{pt\}\times \mathbb{P}^1$. The pairing between $H^{1,1}(X_0)$ and $H^{2,2}(X_0)$ is given by
\begin{eqnarray*}
\mathbb{P}^2\times \{pt\}.\mathbb{P}^1\times \{pt\}&=&0,\\
\mathbb{P}^2\times \{pt\}.\{pt\}\times \mathbb{P}^1&=&1,\\
\mathbb{P}^1\times \mathbb{P}^1.\mathbb{P}^1\times \{pt\}&=&1,\\
\mathbb{P}^1\times \mathbb{P}^1.\{pt\}\times \mathbb{P}^1&=&0.
\end{eqnarray*}

By Whitney's formula, we have
\begin{eqnarray*}
c_1(X_0)&=&2\mathbb{P}^2\times \{pt\}+3\mathbb{P}^1\times \mathbb{P}^1,\\
c_2(X_0)&=&6\mathbb{P}^1\times \{pt\}+3\{pt\}\times \mathbb{P}^1.
\end{eqnarray*} 

Therefore, we can check that $X_0$ satisfies all the conditions of Theorems  \ref{Theorem2}, \ref{Theorem3}, \ref{Theorem4} and \ref{Theorem5}. In particular, if $D_1,\ldots ,D_n\subset X_0$ are pairwise disjoint smooth curves, and $\pi _1:X_1\rightarrow X_0$ is the blowup at $D_1,\ldots ,D_n$, then for any automorphism $f$ of $X_1$  we have $\lambda _1(f)=\lambda _2(f)$. However, $X_0$ does not satisfy the conditions of Theorem \ref{Theorem1}, its Picard number is $2>1$. For an appropriate choice of curves $D_1,\ldots ,D_n$, the threefold $X_1$ has automorphisms of positive entropy. In fact, there is a rational surface $S$ obtained from $\mathbb{P}^2$ by blowing up distinct points $p_1,\ldots ,p_n\in \mathbb{P}^2$ such that $S$ has an automorphism of positive entropy. If we choose $D_j=p_j\times \mathbb{P}^1$, then $D_j$ are smooth rational curves which are disjoint, and $X_1$ has an automorphism of positive entropy.

\subsection{The case $X_0=\mathbb{P}^1\times \mathbb{P}^1\times \mathbb{P}^1$} This case is very similar to the case $X_0=\mathbb{P}^2\times \mathbb{P}^1$ above. The readers can easily redo all the (analogs of) computations and constructions in the previous section. 
 
\subsection{The case $X_0=$ a complete intersection in $\mathbb{P}^N$}
Let $X_0$ be a smooth projective threefold which is a complete intersection in $\mathbb{P}^N$. This means that $X_0$ is the intersection of smooth hypersurfaces $D_1,\ldots ,D_{N-3}$ of $\mathbb{P}^N$. By Lefschetz's hyperplane theorem, $X_0$ has Picard number $1$. We now show that $X_0$ satisfies the conditions of Theorem \ref{Theorem2}.

\begin{lemma}
Let $\zeta$ be a non-zero movable class in $X_0$. Then $\zeta .c_2(X_0)>0$.
\label{LemmaCompleteIntersection}\end{lemma}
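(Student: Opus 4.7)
The plan is to reduce to showing $H.c_2(X_0)>0$ for $H$ the hyperplane class of $X_0$, and then to evaluate this intersection number directly from the normal bundle sequence.

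First, by the Lefschetz hyperplane theorem, $NS_{\mathbb{R}}(X_0)=\mathbb{R}.H$. Since the pushforward of a nef class under a proper holomorphic map is pseudoeffective (cf.\ the remark after Lemma \ref{LemmaPullbackNefClasses}), any nonzero movable $\zeta\in NS_{\mathbb{R}}(X_0)$ must be of the form $\zeta=aH$ with $a>0$. Hence it suffices to show that $H.c_2(X_0)>0$.

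Next, put $d_i=\deg D_i$ for $i=1,\ldots,N-3$. From the identifications $N_{X_0/\mathbb{P}^N}=\bigoplus_i\mathcal{O}_{X_0}(d_i)$, the Euler sequence on $\mathbb{P}^N$, and Whitney's formula, one obtains $c(T_{X_0})=(1+H)^{N+1}\big/\prod_{i=1}^{N-3}(1+d_iH)$. Expanding in degree $2$ and writing $\sigma_k=e_k(d_1,\ldots,d_{N-3})$ for the elementary symmetric polynomials, this yields $c_2(X_0)=\bigl[\binom{N+1}{2}-(N+1)\sigma_1+\sigma_1^2-\sigma_2\bigr]H^2$. Since $H^3=\prod_id_i>0$, the statement reduces to showing $P:=\binom{N+1}{2}-(N+1)\sigma_1+\sigma_1^2-\sigma_2>0$.

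For $N=3$ there are no $d_i$, and $P=6$. For $N\geq 4$, set $n=N-3\geq 1$; Cauchy--Schwarz on $(d_1,\ldots,d_n)$ gives $\sum_id_i^2\geq\sigma_1^2/n$, whence $\sigma_2=\tfrac{1}{2}(\sigma_1^2-\sum_id_i^2)\leq\tfrac{n-1}{2n}\sigma_1^2$, so $P\geq\binom{N+1}{2}-(N+1)\sigma_1+\tfrac{n+1}{2n}\sigma_1^2$. The right-hand side is a quadratic in $\sigma_1$ with positive leading coefficient; a direct computation shows its discriminant equals $-3(N+1)/(N-3)<0$, so $P>0$.

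The main obstacle is this last positivity step: $P$ is not manifestly positive, because as the $d_i$ grow both $(N+1)\sigma_1$ and $\sigma_2$ become large. The crucial point is that Cauchy--Schwarz bounds $\sigma_2$ in terms of $\sigma_1^2$ just tightly enough to force the resulting quadratic in $\sigma_1$ to have strictly negative discriminant.
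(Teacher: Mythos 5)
Your proof is correct and follows essentially the same route as the paper's: reduce via Lefschetz and pseudoeffectivity to the positivity of the coefficient of $H^2$ in $c_2(X_0)$, bound $\sigma_2\leq\frac{n-1}{2n}\sigma_1^2$ by Cauchy--Schwarz, and show the resulting quadratic in $\sigma_1$ is positive. The only difference is in the last step, where the paper evaluates that same quadratic at the integer points near its vertex, while you observe its discriminant equals $-3(N+1)/(N-3)<0$ --- a slightly cleaner finish that does not even use integrality of $\sigma_1$.
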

\begin{proof}
Let $d_1,\ldots ,d_{N-3}$ be the degrees of $V_1,\ldots ,V_{N-3}$.  Let $h$ be the class of a hyperplane on $X$. The Chern classes of the normal bundle $N_{X_0/\mathbb{P}^n}$ is given by the formula
\begin{eqnarray*}
c(N_{X_0/\mathbb{P}^n})=\prod _{j=1}^{N-3}(1+d_jh).
\end{eqnarray*}
In particular,
\begin{eqnarray*}
c_1(N_{X_0/\mathbb{P}^n})&=&(\sum _{j}d_j)h,\\
c_2(N_{X_0/\mathbb{P}^n})&=&(\sum _{i<j}d_id_j)h^2.
\end{eqnarray*}
From the exact sequence
\begin{eqnarray*}
0\rightarrow T_{X_0}\rightarrow T_{\mathbb{P}^4}|_{X_0}\rightarrow N_{X_0/\mathbb{P}^3}\rightarrow 0,
\end{eqnarray*}
and the splitting principle for Chern classes, it follows that
\begin{eqnarray*}
c_1(X_0)&=&c_1(\mathbb{P}^n)|_{X_0}-c_1(N_{X_0/\mathbb{P}^n})= ((n+1)-\sum _jd_j)h,\\
c_2(X_0)&=&c_2(\mathbb{P}^n)|_{X_0}-c_2(N_{X_0/\mathbb{P}^n})-c_1(X_0)c_1(N_{X_0/\mathbb{P}^n})\\
&=&(\frac{(n+1)n}{2}-\sum _{i<j}d_id_j-(n+1)\sum _{j}d_j+(\sum _{j}d_j)^2)h^2.
\end{eqnarray*}
We have
\begin{eqnarray*}
&&\frac{(n+1)n}{2}-\sum _{i<j}d_id_j-(n+1)\sum _{j}d_j+(\sum _{j}d_j)^2\\
&=&[\frac{n-4}{2(n-3)}(\sum _jd_j)^2-\sum _{i<j}d_id_j]+[\frac{n(n+1)}{2}+\frac{n-2}{2(n-3)}(\sum _jd_j)^2-(n+1)\sum _jd_j].
\end{eqnarray*}
By Cauchy-Schwarz inequality, the first bracket on the right hand side of the above expression is non-negative. We now show that the second bracket is positive. We define $x=\sum _jd_j$. Then $x$ is a positive integer which is $\geq n-3$, and the second bracket is quadratic in $x$:
\begin{eqnarray*}
\frac{n(n+1)}{2}+\frac{n-2}{2(n-3)}(\sum _jd_j)^2-(n+1)\sum _jd_j=\frac{n(n+1)}{2}-(n+1)x+\frac{(n-2)}{2(n-3)}x^2=: g(x).
\end{eqnarray*}
The critical point of $g$ is $x_0=(n+1)(n-3)/(n-2)<n$. Hence, to show that $g(x)>0$ for all positive integer $x\geq n-3$, it suffices to show that $g(n-3),g(n-2),g(n-1),g(n)>0$ for any positive integer $n\geq 4$. We now check this latter claim. 

For $x=n-3$
\begin{eqnarray*}
g(n-3)=6>0.
\end{eqnarray*}
(Note that in this case all $d_j$ are $1$ and $X_0$ is no other than $\mathbb{P}^3$.)

For $x=n-2$, using that $(n-2)^2> (n-1)(n-3)$, we obtain
\begin{eqnarray*}
g(n-2)>\frac{n(n+1)}{2}-(n^2-n-2)+\frac{(n-2)(n-1)}{2}=3>0.
\end{eqnarray*}

For $x=n-1$, we have
\begin{eqnarray*}
g(n-1)=\frac{2(n-2)}{(n-3)}>0.
\end{eqnarray*}

For $x=n$, we have
\begin{eqnarray*}
g(n)=\frac{1}{(n-3)}>0.
\end{eqnarray*}

 A movable class is in particular  psef, i.e. can be represented by a positive closed current. Hence,  if $\zeta$ is a non-zero movable class on $X_0$ then $\xi .c_2(X_0)>0$.  Hence, Theorem \ref{Theorem2} can be applied for such a $X_0$. 

\end{proof}

\subsection{A generalization of Theorem \ref{TheoremResolutionTheMapJ}}
The proof of Theorem \ref{Theorem3} shows that the conclusion is still valid in the following more general setting. Let $\pi _1:X_1\rightarrow X_0=\mathbb{P}^3$ be the blowup at $n$ points $p_1,\ldots ,p_n$. Let $E_1,\ldots ,E_n$ be the exceptional divisors. Let $D_1,\ldots ,D_m\subset X_1$ be pairwise disjoint smooth curves. Let $X=X_2$ be the blowup of $X_1$ at $D_1,\ldots ,D_m$. We define
\begin{eqnarray*}
\gamma :=\sum _j\deg (\pi _1)_*(D_j).
\end{eqnarray*} 
Assume that there is $\lambda >0$ such that for any $l$:
\begin{eqnarray*}
\sum _{j}E_l.D_j\leq \lambda ,
\end{eqnarray*}
and moreover
\begin{eqnarray*}
\frac{6+\gamma }{\lambda}>\frac{11}{2}.
\end{eqnarray*}
Moreover, assume that for any $j$ 
\begin{eqnarray*}
(\frac{1}{2}+\frac{1}{\lambda})c_1(X_1).D_j\geq \frac{g_j-1}{2},
\end{eqnarray*}
where $g_j$ is the genus of $D_j$.

\section{A possible application to the Ueno's threefold}
\label{SectionPossibleApplication}

Let $E_{\sqrt{-1}}$ be an elliptic curve with an automorphism of order $4$, which we denote by $\sqrt{-1}$. In \cite{ueno}, Ueno asked whether the quotient variety $E_{\sqrt{-1}}^3/\sqrt{-1}$ is rational. Campana \cite{campana} showed that the variety is rationally connected. Then, by a combination of the two papers \cite{catanese-oguiso-truong} and \cite{colliot-thelene}, it follows that $E_{sqrt{-1}}^3/\sqrt{-1}$ is rational. Previously, a similar construction, using instead an elliptic curve with an automorphism of order $3$, has been shown to be rational (see \cite{oguiso-truong}). 

The automorphism $\sqrt{-1}$ on $E_{\sqrt{-1}}^3$ has $8$ fixed points and $64-8$ points of period $2$. Therefore, $E_{\sqrt{-1}}^3/\sqrt{-1}$ has $8+28=36$ singular points. Let $X_4$ be the minimal resolution of $E-{\sqrt{-1}}^3/\sqrt{-1}$, that is $X_4$ is the blowup of $E_{\sqrt{-1}}^3/\sqrt{-1}$ at the $36$ singular points.  

Since $X_4$ is birational equivalent to $\mathbb{P}^3$, by the weak factorization theorem, $X_4$ can be obtained from $\mathbb{P}^3$ by  a combination of smooth blowups and blowdowns. It is then natural to ask the following question: 

{\bf Question 2.} Can $X_4$ be obtained from $\mathbb{P}^3$ or $\mathbb{P}^2\times \mathbb{P}^1$ or $\mathbb{P}^1\times \mathbb{P}^1\times \mathbb{P}^1$ by a finite composition of smooth blowups only? 

This question is interesting in several aspects. First, the two dimensional analogue, that is the minimal resolution of $E_{\sqrt{-1}}^2/\sqrt{-1}$, has been shown to be a finite composition of point blowups starting from $\mathbb{P}^1\times \mathbb{P}^1$ in \cite{campana}. Last, the final proof that $X_4$ is rational in \cite{colliot-thelene} is rather abstract. Hence, if the answer to Question 2 is affirmative, it will give an explicit proof that $X_4$ is rational. 

We note that the smooth threefold $X_4$ has automorphisms $f$ coming from the complex torus $E_{\sqrt{-1}}^3$ with $\lambda _1(f)\not= \lambda _2(f)$. Therefore, from the discussion in the introduction of this paper, it is plausible to conclude that the answer to Question 2 is negative. The purpose of this section is to give more weight to this speculation.

We first show that if the answer for Question 2 is affirmative, then centers of the individual blowups  must be smooth rational curves. In the below, for any quasi-projective variety $Z$ we will denote by $\chi (Z)$ the Euler characteristic with compact support. For a smooth projective manifold $Z$, we denote by $\rho (Z)$ the Picard number of $Z$. 

\begin{theorem}
Let $X_0$ be any smooth projective threefold such that $\chi (X_0)=2+2\rho (X_0)$ (for example, $X_0$ is $\mathbb{P}^3$, $\mathbb{P}^2\times \mathbb{P}^1$ or $\mathbb{P}^1\times \mathbb{P}^1\times \mathbb{P}^1$).  Assume that $X_4$ can be obtained from the $X_0$ by a finite composition of smooth blowups. Then the curves which are centers of the blowups must be smooth rational curves. 
\label{TheoremPossibleBlowup}\end{theorem}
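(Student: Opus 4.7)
The plan is to introduce the numerical invariant $I(X):=\chi(X)-2\rho(X)$ and to track its behavior under smooth blowups of a threefold. For a point blowup the exceptional divisor is $\mathbb{P}^2$, so $\chi$ increases by $\chi(\mathbb{P}^2)-1=2$ and $\rho$ increases by $1$; hence $I$ is unchanged. For a blowup along a smooth curve $C$ of genus $g$ the exceptional divisor is a $\mathbb{P}^1$-bundle over $C$, so $\chi$ increases by $\chi(\mathbb{P}^1)\chi(C)-\chi(C)=2-2g$ while $\rho$ increases by $1$; hence $I$ decreases by exactly $2g$. Thus $I$ is preserved under point blowups and under blowups along smooth \emph{rational} curves, and strictly decreases under any blowup of a positive-genus curve.

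Consequently, if $X_4$ is obtained from $X_0$ by a finite sequence of smooth blowups with curve centers of genera $g_1,\ldots,g_t$ (together with any number of point centers), then $I(X_4)=I(X_0)-2\sum_i g_i$. The hypothesis $\chi(X_0)=2+2\rho(X_0)$ is precisely $I(X_0)=2$, so it suffices to prove $I(X_4)=2$; from this $\sum_i g_i=0$ and hence every $g_i=0$, which is the desired conclusion.

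To evaluate $I(X_4)$ I would use that $X_4$, being birational to $\mathbb{P}^3$, is a smooth rational projective threefold, so $h^{p,0}(X_4)=0$ for $p>0$. A direct computation with the Hodge decomposition (combined with Poincar\'e and Hodge symmetries) gives the identity
\[ I(X)=\chi(X)-2\rho(X)=2-2\,h^{1,2}(X) \]
for every smooth rational projective threefold $X$. So the task reduces to showing $h^{1,2}(X_4)=0$. Set $Y:=E_{\sqrt{-1}}^3/\langle\sqrt{-1}\rangle$; then $X_4\to Y$ resolves the $8$ singular points of type $\tfrac{1}{4}(1,1,1)$ (the fixed points of $\sqrt{-1}$) and the $28$ singular points of type $\tfrac{1}{2}(1,1,1)$ (the strict period-two orbits). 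The minimal resolution of each such cyclic quotient introduces a single $\mathbb{P}^2$ as exceptional divisor (a weighted blowup suffices), and since $h^{1,2}(\mathbb{P}^2)=0$ these contribute nothing to $h^{1,2}$. Hence $h^{1,2}(X_4)=h^{1,2}(Y)=\dim H^{1,2}(E_{\sqrt{-1}}^3)^{\sqrt{-1}}$. The automorphism $\sqrt{-1}$ acts on each $dz_j$ by $i$ and on each $d\bar z_k$ by $-i$, so on $H^{1,2}\cong \wedge^1 V^*\otimes \wedge^2 \bar V^*$ it acts by the scalar $i\cdot(-i)^2=-i\ne 1$. Therefore the invariant subspace is zero, $h^{1,2}(X_4)=0$, and $I(X_4)=2$.

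The main obstacle is justifying cleanly the identification $h^{1,2}(X_4)=h^{1,2}(Y)$ in the presence of quotient singularities, i.e.\ that the exceptional $\mathbb{P}^2$'s contribute only to Hodge pieces of type $(p,p)$. A cleaner alternative that bypasses this subtlety is to compute $\chi(X_4)$ and $\rho(X_4)$ separately: the orbifold Euler formula gives $\chi(Y)=\tfrac{1}{4}(0+8+64+8)=20$, and each of the $36$ resolutions increases $\chi$ by $2$ and $\rho$ by $1$, so $\chi(X_4)=92$ and $\rho(X_4)=\dim H^{1,1}(E_{\sqrt{-1}}^3)^{\sqrt{-1}}+36=9+36=45$; hence $I(X_4)=92-90=2$ directly.
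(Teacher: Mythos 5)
Your argument is correct and, in its operative form (the ``cleaner alternative'' that computes $\chi(X_4)=92$ and $\rho(X_4)=45$ directly), is essentially the paper's proof: the paper likewise computes these two numbers via the same quotient/excision count and the $36$ exceptional $\mathbb{P}^2$'s, notes that a point blowup adds $2$ to $\chi$ while a genus-$g$ curve blowup adds $2-2g$ and each adds $1$ to $\rho$, and concludes from the forced equality $92=\chi(X_0)+2(\rho(X_4)-\rho(X_0))$ that every curve center is rational. Your repackaging via the invariant $I(X)=\chi(X)-2\rho(X)$, and the optional Hodge-theoretic identity $I=2-2h^{1,2}$ for smooth rational threefolds (whose one delicate point, the behaviour of $h^{1,2}$ under the resolution of the quotient singularities, you correctly flag and then bypass), are pleasant reformulations rather than a different proof.
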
 
\begin{proof}
We divide the proofs into several steps. 

Step 1. We claim that $\rho (X_4)=45$. In fact, $E_{\sqrt{-1}}^3$ has Picard number $9$. Also, the blowup $X_4\rightarrow E_{\sqrt{-1}}^3/\sqrt{-1}$ has $36$ exceptional divisors, one for each singular points. Hence the Picard number of $X_4$ is $9+36=45$. 

Step 2. We claim that $\chi (X_4)=92$. In fact, first we consider the quotient map $\sigma :E_{\sqrt{-1}}^3\rightarrow E_{\sqrt{-1}}^3/\sqrt{-1}$. Let $A\subset E_{\sqrt{-1}}^3$ be the set of fixed points of $\sqrt{-1}$, and $B\subset E_{\sqrt{-1}}^3$ the set of points of period $2$ of $\sqrt{-1}$. As mentioned before, the cardinals of $|A|$ and $|B|$ are $8$ and $56$, and the cardinals of $\sigma (A)$ and $\sigma (B)$ are $8$ and $28$. Since the map $\pi :E_{\sqrt{-1}}^3-(A\cup B)\rightarrow E_{\sqrt{-1}}^3/\sqrt{-1}-(\sigma (A)\cup \sigma (B))$ is a $4:1$ map, by the excision property we get
\begin{eqnarray*}
\chi (E_{\sqrt{-1}}^3/\sqrt{-1}-(\sigma (A)\cup \sigma (B)))&=&\chi (E_{\sqrt{-1}}^3-(A\cup B))/4\\
&=&(\chi (E_{\sqrt{-1}}^3)-\chi (A\cup B))/4\\
&=&(0-64)/4=-16.\end{eqnarray*}   
Hence, by the excision property, we have $\chi (E^3_{\sqrt{-1}}/\sqrt{-1})=-16 +36=20$. 

Next, we consider the blowup $\pi :X_4\rightarrow E^3_{\sqrt{-1}}/\sqrt{-1}$. This map has $36$ exceptional divisors, each is a $\mathbb{P}^2$. Since $\chi (\mathbb{P}^2)=3$ and the blowup map is $1:1$ outside exceptional divisors, arguing as above we obtain 
\begin{eqnarray*}
\chi (X_4)=(\chi (E^3_{\sqrt{-1}}/\sqrt{-1})-36 \chi (pt))+36\times \chi (\mathbb{P}^2)=20-36+36\times 3=92. 
\end{eqnarray*}
Here $pt$ denotes a point. 

Step 3: $\chi (X_4)=2+2\rho (X_4)$. This follows from Steps 1 and 2.

Step 3. Let $Z$ be a smooth projective threefold and $\pi :Z_1\rightarrow Z$ a point blowup. Then $\chi (Z_1)=\chi (Z)+2$. This follows easily from the properties of the blowup of a threefold.

Step 4. Let $Z$ be a smooth projective threefold and $\pi :Z_1\rightarrow Z$ a blowup at a smooth curve $C\subset Z$. Then $\chi (Z_1)\leq \chi (Z)+2$, with equality if and only if $C$ is a smooth rational curve. Again, this follows easily from the properties of the blowup of a threefold, and the fact that if $C$ is not a smooth rational curve then $h^{1}(C)>0$. 

Step 5: final step. If $X_4$ is a finite composition of smooth blowups of $X_0$, then the number of blowups needed is $\rho (X_4)-\rho (X_0)=45-\rho (X_0)$. From the previous steps we have
\begin{eqnarray*}
92=\chi (X_4)\leq \chi (X_0)+2 (45-\rho (X_0))=2+2\rho (X_0)+90-2\rho (X_0)=92.
\end{eqnarray*} 
Since equality occurs, it follows from Steps 3 and 4 that the centers of the individual blowups must be either a point or a smooth rational curve. 
\end{proof}

Now we show how Theorem \ref{TheoremPossibleBlowup} and Theorems \ref{Theorem3},  \ref{Theorem4} and \ref{Theorem5} almost give the proof that the answer to Question 2 is negative. In fact, let $X_0$ be $\mathbb{P}^3$, $\mathbb{P}^2\times \mathbb{P}^1$ or $\mathbb{P}^1\times \mathbb{P}^1\times \mathbb{P}^1$. Then, $X_0$ satisfies Condition B, while $X_4$ does not satisfy Condition B. Assume that $X_4$ is a finite composition of smooth blowups starting from $X_0$. Let $\pi _j:~Z_{j+1}\rightarrow Z_j$ be an individual blowup in the sequence, where $Z_{j}$ satisfies Condition B. If $\pi _j$ is a point blowup then by Theorem \ref{Theorem3}, $Z_{j+1}$ also satisfies Condition B. If $\pi _j$ is the blowup of a smooth curve $C\subset Z_j$, then by Theorem \ref{TheoremPossibleBlowup}, we have that $C$ must be a smooth rational curve. If $c_1(Z_j).C\not= 2g-2=-2$, then by Theorem \ref{Theorem5} we have that $Z_{j+1}$ also satisfies Condition B. The remaining case is when $c_1(Z_j).C=-2$. But in this case, half of the conditions of part 2 of Theorem \ref{Theorem4} is satisfied. The only condition that is missing is the condition that $C$ is not the only effective curve in its cohomology class. Using part 1 of Theorem \ref{Theorem3}, we can also show that if the normal vector bundle $N_{C/Z_j}$ is not isomorphic to $\mathcal{O}(-2)\oplus \mathcal{O}(-2)$, then $Z_{j+1}$ also satisfies Condition B.

\end{document}